\title[Analogy between optimal transport and minimal entropy]{About the analogy between optimal transport and minimal entropy}
\date{\today}
\author{Ivan Gentil}
\address{Ivan Gentil. Institut Camille Jordan, Umr Cnrs 5208. Université Claude Bernard. Lyon, France}
\email{gentil@math.univ-lyon1.fr}
\author{Christian Léonard}
\address{Christian Léonard. Modal-X. Université Paris Ouest. Nanterre, France}
\email{christian.leonard@u-paris10.fr}
\author{Luigia Ripani}
\address{Luigia Ripani. Institut Camille Jordan, Umr Cnrs 5208. Université Claude Bernard. Lyon, France}
\email{ripani@math.univ-lyon1.fr}
\newtheorem{theorem}[equation]{Theorem}
\newtheorem{lemma}[equation]{Lemma}
\newtheorem{proposition}[equation]{Proposition}
\newtheorem{corollary}[equation]{Corollary}
\newtheorem{definition}[equation]{Definition}
\theoremstyle{remark}
\newtheorem{remark}[equation]{Remark}
\newtheorem{remarks}[equation]{Remarks}
\numberwithin{equation}{section}
\newcommand\pf{_{\#}}
\newcommand{\Leb}{\mathrm{Leb}}
\newcommand{\as}{ \textrm{-}\mathrm{a.s.}}
\newcommand{\ud}{\frac{1}{2}}
\newcommand{\R}{\mathbb{R}}
\newcommand\OO{\Omega}
\newcommand\ii{{[0,1]}}
\newcommand\PO{\mathcal{P}(\OO)}
\newcommand\MO{\mathcal{M}(\OO)}
\newcommand{\N}{ \mathcal{N}}
\newcommand\IO{\int_\OO}
\newcommand\ZZ{\mathbb{R}^n}
\newcommand\ZZZ{\ZZ\times\ZZ}
\newcommand\PZ{\mathcal{P}(\ZZ)}
\newcommand\PdZ{\mathcal{P}_2(\ZZ)}
\newcommand\IZ{\int_{\ZZ}}
\newcommand\IZZ{\int_{\ZZZ}}
\newcommand{\IiZ}{\int _{ \ZZ\times\ii}}
\newcommand\XX{\mathbb{R}^n}
\newcommand\MX{\mathcal{M}(\XX)}
\newcommand\IX{\int_{\XX}}
\newcommand{\Rm}{R_{ \mu_{0}}}
\newcommand{\AR}{\mathcal{A}^R}
\begin{document}

\keywords{Schr\"odinger problem, entropic interpolation, Wasserstein distance, Kantorovich duality}
\subjclass[2010]{}
\thanks{Authors partially supported by the French ANR projects: GeMeCoD and  ANR-12-BS01-0019 STAB. The third author is supported by the LABEX MILYON (ANR-10-LABX-0070) of Université de Lyon, within the program "Investissements d'Avenir" (ANR-11-IDEX-0007) operated by the French National Research Agency (ANR).}


\begin{abstract}
We describe some analogy between optimal transport and the Schrödinger problem where the  transport cost is replaced by an entropic cost  with a reference path measure. A dual Kantorovich type formulation and  a Benamou-Brenier type representation formula of the entropic cost are derived, as well as contraction inequalities with respect to the entropic cost. This analogy is also illustrated with some numerical examples  where the reference path measure is given by  the Brownian motion or the Ornstein-Uhlenbeck process.

Our point of view is measure theoretical, rather than based on stochastic optimal control, and the relative entropy with respect to path measures plays a prominent role. 
 \end{abstract}

\maketitle 

\noindent
{\sc Résumé.} Nous décrivons des analogies entre le transport optimal et le problème de Schrödinger lorsque le coût du transport est remplacé par un coût entropique avec une mesure de référence sur les trajectoires. Une formule duale de Kantorovich, une formulation de type Benamou-Brenier du coût entropique sont démontrées,  ainsi que des inégalités de contraction par rapport au coût entropique. Cette analogie est aussi illustrée par des exemples numériques où la mesure de référence sur les trajectoires est donnée par le mouvement  Brownien ou bien le processus d'Ornstein-Uhlenbeck.    \\
Notre approche s'appuie sur la théorie de la mesure, plutôt que 
sur le contrôle optimal stochastique, et l'entropie relative joue un rôle fondamental.


\section{Introduction}

In this article,   some analogy between  optimal transport and the Schrödinger problem is investigated.  A    Kantorovich type dual equality,  a Benamou-Brenier type representation  of the entropic cost and contraction inequalities with respect to the entropic cost are derived  when the  transport cost is replaced by an entropic one. This analogy is also  illustrated   with some  numerical examples.
\\
Our point of view is measure theoretical rather than based on stochastic optimal control as is done in the recent literature; the relative entropy with respect to path measures plays a prominent role.

Before explaining  the Schrödinger problem which is associated to an entropy minimization, we first introduce the Wasserstein quadratic transport cost $W_2^2$ and its main properties. For simplicity, our results are stated in $\ZZ$ rather than in a general Polish space. Let us note that properties of the quadratic transport cost can be found in the monumental work by C. Villani~\cite{villani1,villani2}. In particular its square root $W_2$ is a (pseudo-)distance on the space of probability measures which is called  Wasserstein distance. It has been intensively studied and has many interesting applications. For instance it is an efficient tool  for proving convergence to equilibrium of evolution equations, concentration inequalities for measures or stochastic processes and it allows to define curvature in metric measure spaces, see the textbook \cite{villani2} for these applications and more.

\subsection*{The Wasserstein quadratic cost $W_2^2$ and the Monge-Kantorovich problem}
Let $\PZ$ be the set of all probability measures on $\ZZ$. We denote its subset of probability measures with a second moment by $\PdZ=\{\mu\in\PZ;\int |x|^2\,\mu(dx)<\infty\}$. For any $\mu_0,\mu_1\in \PdZ$, the Wasserstein quadratic cost is    
\begin{equation}\label{eq-03}
W_2^2(\mu_0,\mu_1)=\inf_\pi {\IZZ |y-x|^2\, \pi(dxdy)},
\end{equation}
where the infimum is running over all the couplings $ \pi$ of $ \mu_0$ and $ \mu_1$, namely, all the probability measures $\pi$ on $\ZZZ$ with marginals 
$\mu_0$ and $\mu_1$, that is   
for any bounded measurable  functions $\varphi$ and $\psi$ on $\ZZ$,
\begin{equation}\label{eq-04}
\IZZ (\varphi(x)+ \psi(y))\,\pi(dxdy)=\IZ \varphi\, d\mu_0+\IZ \psi\, d\mu_1. 
\end{equation}
In restriction to $\PdZ,$ the pseudo-distance $W_2$ becomes a genuine distance. The {\it Monge-Kantorovich problem} with a quadratic cost function, consists in finding the optimal couplings $\pi$ that minimize~\eqref{eq-03}.

\subsection*{The entropic cost $ \mathcal{A}^R$ and the Schrödinger problem}

Let fix some reference nonnegative measure $R$ on the path space $\Omega= C([0,1], \ZZ)$ and denote $R _{ 01}$ the measure on $\ZZ\times\ZZ$. It describes the joint law of the initial position $X_0$  and the final position $X_1$ of a random process on $\ZZ$ whose law is $R$. This means that $$R _{ 01}=(X_0,X_1)\pf R$$ is the push-forward  of $R$ by the mapping $(X_0,X_1).$   Recall that the push-forward of a measure $ \alpha$ on the space $ \mathsf{A}$ by the measurable mapping $f: \mathsf{A}\to \mathsf{B}$ is defined by 
$$
 f\pf \alpha(db)= \alpha( f ^{ -1}(db)),\qquad db\subset \mathsf{B},
 $$
 in other words, for any positive function $H$, 
 $$
 \int Hd(f\pf \alpha)=\int H(f)d\alpha.
 $$
For  any   probability measures  $\mu_0$ and  $\mu_1$ on $\ZZ$, the entropic cost  $\mathcal A^R(\mu_0,\mu_1)$ of $( \mu_0, \mu_1)$  is defined by
$$
\mathcal{A}^{R}(\mu_0,\mu_1) = \inf_\pi H(\pi|R _{ 01})
$$
where $H(\pi|R _{ 01})=\int _{ \ZZ\times\ZZ}\log( d \pi/ d R _{ 01})\, d \pi$ is the relative entropy of $\pi$ with respect to $R _{ 01}$ and $\pi$   runs through all the couplings of $ \mu_0$ and $ \mu_1$. The {\it Schrödinger problem} consists in finding the unique optimal entropic plan $\pi$ that minimizes the above infimum. 

In this article, we choose $R$ as the reversible Kolmogorov continuous Markov process specified by the generator $\frac{1}{2}(\Delta-\nabla V\cdot\nabla )$ and the initial reversing measure $e ^{ -V(x)}\,dx$.  

\subsection*{Aim of the paper}

Below in this introductory section, we are going to  focus onto four main  features of the quadratic transport cost $W_2^2$. Namely:
\begin{itemize}
\item
the Kantorovich dual formulation of $W_2^2$;
\item
the Benamou-Brenier dynamical formulation of $W_2^2$;
\item
the displacement interpolations, that is the $W_2$-geodesics in  $\PdZ$;
\item the contraction of the heat equation with respect to $W_2^2.$ 
\end{itemize} 
The goal of this article is to recover analogous properties for $ \mathcal{A}^R$  instead of $W_2^2$, by replacing the Monge-Kantorovich problem with the Schrödinger problem. 

\subsection*{Several aspects of  the quadratic Wasserstein cost}

Let us provide some detail about these four properties.  
\subsubsection*{Kantorovich dual formulation of $W_2^2$}

The following duality result was proved by Kantorovich in~\cite{kantorovich}. For any $\mu_0,\mu_1\in \PdZ$, 
\begin{equation}\label{eq-05}
W_2^2(\mu_0,\mu_1)=\sup _{  \psi}\left\{\IZ \psi \,d\mu_1-\IZ Q\psi \,d\mu_0\right\}, 
\end{equation}
where the supremum  runs over all bounded  continuous function $ \psi$ and 
$$
Q\psi(x)=\sup_{y\in\ZZ}\left\{\psi(y)-|x-y|^2\right\}, \quad x\in\ZZ.
$$ 
It is often expressed in the equivalent form, 
\begin{equation}\label{eq-kanto-classique}
W_2^2(\mu_0,\mu_1)=\sup  _{  \varphi}\left\{\IZ \widetilde{Q} \varphi \,d\mu_1-\IZ \varphi \,d\mu_0\right\}, 
\end{equation}
where the supremum  runs over all bounded  continuous function $ \varphi$ and 
$$
\widetilde{Q} \varphi(y)=\inf_{x\in\ZZ}\left\{ \varphi(x)+|y-x|^2\right\},
\quad y\in\ZZ.
$$ 
The map $Q \psi$ is called  the sup-convolution of $ \psi$ and its defining identity is sometimes referred to as the Hopf-Lax formula.

\subsubsection*{Benamou-Brenier formulation of $W^2_2$}

The Wasserstein cost admits  a dynamical formulation: the so-called Benamou-Brenier formulation which was proposed in~\cite{benamou-brenier}.  It states that for any $\mu_0,\mu_1\in \PdZ$, 
\begin{equation}\label{bena}
W_2^2(\mu_0,\mu_1)=\inf _{ (\nu,v)} \IiZ  |v_t|^2\,d \nu_t\, dt,
\end{equation}
where the infimum  runs over all  paths $(\nu_t,v_t)_{t\in[0,1]}$ where   $\nu_t\in\PZ$ and $v_t(x)\in\ZZ$  are such that $\nu_t$ is absolutely continuous with respect to time in the sense of~\cite[Ch.\,1]{ags} for all $0\le t\le 1$, $\nu_0=\mu_0$, $\nu_1=\mu_1$ and 
$$
\partial_t\nu_t+\nabla \cdot(\nu_t v_t)=0,
\quad 0\le t\le 1.
$$
In this equation which is understood in a weak sense, 
 $\nabla\cdot\,$ stands for the standard divergence of a vector field  in $\ZZ$ and $\nu_t$ is identified with its density with respect to Lebesgue measure. This general result is proved in~\cite[Ch.\,8]{ags}. A proof  under the additional  requirement that $\mu_0,\mu_1$ have  compact supports is available 
 in~\cite[Thm.\,8.1]{villani1}.

\subsubsection*{Displacement interpolations}  

The metric space $(\PdZ,W_2)$ is  geodesic. This means that for any probability measure $\mu_0,\mu_1\in \PdZ$, there exists a path $(\mu_t)_{t\in[0,1]}$ in $\PdZ$ such that   for any $s,t\in[0,1]$, 
$$
W_2(\mu_s,\mu_t)=|t-s|W_2(\mu_0,\mu_1). 
$$
Such a  path is  a constant speed geodesic in $(\PdZ,W_2)$, see~\cite[Ch.\,7]{ags}. Moreover when $\nu$ is absolutely continuous with respect to the Lebesgue measure, there exists a  convex function $ \psi$ on $\ZZ$ such that for any $t\in[0,1]$, the geodesic is given by
\begin{equation}\label{mccan}
\mu_t=((1-t) \rm{Id}+t\nabla  \psi)_\#\mu_0. 
\end{equation}
This interpolation is called the McCann displacement interpolation in $(\PdZ,W_2)$, see~\cite[Ch.\,5]{villani1}.

\subsubsection*{Contractions in Wasserstein distance}  

Contraction in Wasserstein distance is a way to define the curvature of the underlying space or of the reference Markov operator. In its general formulation, the von Renesse-Sturm theorem tells that the heat equation in a smooth, complete and   connected Riemannian manifold satisfies a contraction property with respect to the Wasserstein distance if and only if the Ricci curvature is bounded from below, see~\cite{renesse-sturm}. 
In the context of the present article where Kolmogorov semigroups on $\ZZ$ are considered,  two main contraction results will be explained with more details in Section~\ref{sec-contraction}.

\subsection*{Organization  of the paper}  
 The setting of the present work  and notation are introduced in Section~\ref{sec-setting}.  The entropic  cost $ \mathcal{A}^R$ is defined with more detail in Section~\ref{sec-schrodinger} together with the related  notion of entropic interpolation, an analogue of the displacement interpolation.  A dual Kantorovich  type formulation  and  a Benamou-Brenier type formulation of the entropic cost are derived respectively at  Sections~\ref{sec-kantorovich} and~\ref{sec-sbb}. Section~\ref{sec-contraction} is dedicated to the contraction properties of the heat flow with respect to the entropic cost.  In the last Section~\ref{sec-examples}, we give some examples of entropic interpolations  between Gaussian distributions when the reference path measure is given by the Brownian motion or the Ornstein-Uhlenbeck process.

\subsection*{Literature}
 The Benamou-Brenier formulation of  the entropic cost which is stated at Corollary~\ref{res-03}  was proved recently by Chen, Georgiou and Pavon in \cite{CGP14} (in a slightly less general setting)  without any mention to optimal transport (in this respect Corollary~\ref{cor-super} relating the entropic and Wasserstein costs is new). Although our proof of Corollary~\ref{res-03}  is close to their proof, we think it is worth including it in the present article to emphasize the analogies between displacement and entropic interpolations. In addition, we also provide a time asymmetric version of this formulation at Theorem~\ref{teo:teo}.
\\
Both  \cite{MT06} and \cite{CGP14}  share the same stochastic optimal control viewpoint. This differs from the entropic approach of the present paper.
\\
Let us notice that Theorem~\ref{thm-contr} is a new result: it provides contraction inequalities with respect to the entropic cost. Moreover, examples and comparison  proposed at the end of the paper, with respect two different kernels (Gaussian and Ornstein-Uhlenbeck) are new.

\section{The reference path measure}
\label{sec-setting}

 We make precise the reference path measure $R$ to which the entropic cost $ \mathcal{A}^R$ is associated. Although more general reversible path measures $R$ would be all right to define a well-suited entropic cost, we prefer  to consider the specific class of Kolmogorov Markov measures. This choice is motivated by the fact that, as presented in \cite{leonard12}, the Monge Kantorovich problem is the limit of a sequence of entropy minimization problems, when a proper fluctuation parameter tends to zero. The Kolmogorov Markov measures, as reference measures in the Schrödinger problem, admit as a limit case the Monge Kantorovich problem with quadratic cost function, namely the Wasserstein distance.

\subsection*{Notation}

For any  measurable set $Y$, we denote respectively by $\mathcal P(Y)$ and $ \mathcal{M}(Y)$  the set of all the probability measures and all positive $ \sigma$-finite measures on $Y$. The \emph{relative entropy} of a probability measure $p \in \mathcal P(Y)$ with respect to a positive  measure $r \in  \mathcal{M}(Y)$ is loosely defined by
\begin{equation*}
H(p|r) :=  \left\{ \begin{array}{ll} 
\int_{Y} \log (dp/dr) dp \in (-\infty, \infty], & \textrm{if }p\ll r,\\
\infty,& \textrm{otherwise.} 
\end{array}\right.
\end{equation*}
For some assumptions on the reference measure $r$ that guarantee the above integral to be meaningful and bounded from below, see after  the regularity hypothesis (Reg2) at page~\pageref{page-reg}. For a rigorous definition and some properties of the relative entropy with respect to an unbounded measure see~\cite{Leo12b}. The state space $\R^n$ is equipped with its Borel $\sigma$-field and  the path space $\Omega$ with the canonical $\sigma$-field $ \sigma(X_t; 0\le t\le 1)$  generated by the canonical process 
$$
X_{t}(\omega) := \omega_{t} \in \R^n, \quad \omega=(\omega_{s})_{0\leq s \leq 1}\in\OO,\ 0\le t\le 1.
$$
For any {path measure} $Q \in \MO$ and any $0\le t\le 1$, we denote
$$
Q_{t}(\cdot) := Q(X_t\in \cdot) = (X_{t})\pf Q \in \mathcal M(\R^n),
$$
the push-forward of $Q$ by $X_t$. When $Q$ is a probability measure, $Q_t$ is  the law of the random location $X_{t}$ at time $t$ when the law of the whole trajectory is $Q$.

\subsection*{The Kolmogorov Markov measure $R$ and its semigroup}
\label{sec-kolmo}

Most of our results can be stated in the general setting  of a Polish state space. 
For the sake of simplicity, the setting of the present paper is particularized. The state space is $\ZZ$  and the reference path measure $R$ is the Markov path measure associated with the generator
\begin{equation}\label{eq-11}
\ud ( \Delta-\nabla V\cdot\nabla)
\end{equation}
and the corresponding reversible measure 
$$
m=e ^{ -V}\,\Leb
$$ 
as its initial measure, where $\Leb$ is the Lebesgue measure. It is assumed that the potential   $V$ is a $ \mathcal{C}^2$  function on $\ZZ$ such that the martingale problem associated with the generator~\eqref{eq-11} on the domain $\mathcal{C}^2 $ and the initial measure $m$ admits a unique solution $R\in\MO$. This is the case for instance when the following hypothesis are satisfied. 

\subsubsection*{Existence hypothesis (Exi)}
There exists some constant $c>0$ such that one of the following assumptions holds true:
\begin{enumerate}[(i)]
\item
$\lim _{ |x|\to \infty}V(x)= + \infty$ and $ \inf \{|\nabla V|^2- \Delta V/2\}>- \infty,$ or
\item
$-x\cdot \nabla V(x)\le c(1+|x|^2),$ for all $x\in\ZZ.$
\end{enumerate}
\medskip

See \cite[Thm.\,2.2.19]{Roy99} for the existence result under the assumptions (i) or (ii).
For any initial condition $X_0=x\in\ZZ,$ the path measure $R_x:=R(\cdot\mid X_0=x)\in\PO$ is the law of the weak solution of  the stochastic differential equation
\begin{equation}\label{eq-11b}
dX_t=-\nabla V(X_t)/2\ dt+ dW_x(t), \quad 0\le t\le 1
\end{equation}
where $W_x$ is an $R_x$-Brownian motion.
The Kolmogorov Markov measure is $$R(\cdot)=\int _{ \ZZ}R_x(\cdot)\,m(dx)\in\mathcal M( \Omega).$$
Recall that $m= e ^{ -V}\,\Leb$ is not necessary a probability measure. 
The Markov semigroup associated to $R$ is defined for any bounded measurable function $f:\ZZ\mapsto \mathbb{R}$ and  any $t\geq0,$ by
$$
T_tf(x)=E _{ R_x}f(X_t),\quad x\in\ZZ.
$$
It is reversible  with reversing measure $m$ as defined in~\cite{bgl}. 
\\
\subsubsection*{Regularity hypothesis (Reg1)}
We also assume for simplicity that $(T_t)_{t\geq0}$ admits for any $t>0$, a  {\it density kernel} with respect to $m$,  a probability density $p_t(x,y)$ such that
\begin{equation}\label{dens}
T_tf(x) = \int_{\ZZ} f(y) p_t(x,y)\, m(dy).
\end{equation}
For instance, when $V(x)=|x|^2/2$, then $R$ is the path measure associated to the Ornstein-Uhlenbeck process with the Gaussian measure as its reversing measure. When $V=0$, we recover the Brownian motion with  Lebesgue measure   as its reversing measure. Examples of Kolmogorov semigroups admitting a density kernel can be found for instance in~\cite[Ch. 3]{stroock} or~\cite[Cor.\,4.2]{bbgm}. This semigroup is fixed once for all. 

\subsection*{Properties of the path measure $R$}
The measure $R$ is our {\it reference} path measure and it satisfies the following properties.
\begin{enumerate}[(a)]
\item It is Markov, that is for any $t\in[0,1]$, 
$
R(X_{[t,1]}\in\cdot|X_{[0,t]})=R(X_{[t,1]}\in\cdot|X_{t}).
$
See~\cite{Leo12b} for the definition of the conditional law for unbounded measures since $R$ is not necessary a probability measure. 
\item It is reversible. This means that for all $0\le T\le 1$, the restriction $R _{ [0,T]}$ of $R$ to the sigma-field $ \sigma(X _{ [0,T]})$ generated by $X _{ [0,T]}=(X_t) _{ 0\le t\le T},$ is invariant with respect to time-reversal, that is $[(X _{ T-t}) _{ 0\le t\le T}]\pf R _{ [0,T]}=R _{ [0,T]}.$ 
\\
Any reversible measure $R$ is stationary, i.e.\ $R_t=m,$ for all $0\le t\le 1$ for some $m\in\MX.$ This measure $m$ is called the reversing measure of $R$ and  is often interpreted as an equilibrium of the dynamics specified by the kernel $(R_x; x\in\XX)$. One says for short that $R$ is $m$-reversible.
\end{enumerate}

\section{Entropic cost and entropic interpolations}
\label{sec-schrodinger}
We define  the Schrödinger problem, the entropic cost and the entropic interpolation which are respectively the analogues of the Monge-Kantorovich problem, the Wasserstein cost and the displacement interpolation that were briefly described in the introduction.


Let us state the definition of the entropic cost associated with $R.$

\begin{definition}[Entropic cost]  Consider the projection $$R_{01} := (X_{0},X_{1})_{\pf}R \in \mathcal M(\mathbb R^n\times\mathbb R^n)$$ of $R$ onto the endpoint space $\mathbb R^n\times\mathbb R^n$.
For any  $\mu_0,\mu_1\in \PZ$,  
 \begin{equation*}
\AR(\mu_0,\mu_1) = \inf \{H(\pi|R_{01}) ; \  \pi \in \mathcal P(\mathbb R^n\times\mathbb R^n): \pi_{0}=\mu_{0}, \pi_{1}=\mu_{1}\}\in (- \infty, \infty]
\end{equation*}
is the $R$-entropic cost of $( \mu_0, \mu_1)$. 
\end{definition}

This definition  is related to a static Schrödinger problem. It also admits a dynamical formulation.
\begin{definition}[Dynamical formulation of the Schrödinger problem]
The Schrödinger problem associated to $R, \mu_0$ and $ \mu_1$ consists in  finding the minimizer $\hat P$ of the relative entropy $H(\cdot|R)$ among all the probability  path measures $P\in\PO$ with prescribed initial and final marginals $P_0= \mu_0$ and $P_1= \mu_1$,
\begin{equation}\label{eq-07}
H(\hat P|R)=\min\{H(P|R),\,\,P\in\PO, P_0= \mu_0, P_1= \mu_1\}.
\end{equation}
\end{definition}
 It is easily seen that  its  minimal value is the {entropic cost},
 \begin{equation}\label{eq-14}
\AR(\mu_0,\mu_1) = \inf \{  H(P|R); \  P\in\PO: P_0= \mu_0, P_1= \mu_1\}\in (- \infty, \infty],
\end{equation}
 see for instance \cite[Lemma\,2.4]{leonard14}.

In the rest of this work, the entropic cost will always be associated with the fixed reference measure $R \in \MO$, therefore without ambiguity we  drop the index and denote $\mathcal A^R = \mathcal A$.

\medskip

\begin{remarks}
\begin{enumerate}[(1)]
\item
First of all, when $R$ is not a probability measure, the relative entropy might take some negative values and even the value $- \infty$. However, because of the decrease of information by push-forward mappings, we have\begin{equation*}
H(P|R)\ge \max(H( \mu_0|m),H( \mu_1|m)),
\end{equation*}
see   \cite[Thm.\,2.4]{Leo12b} for instance.
Hence $H(P|R)$ is well defined in $(- \infty, \infty]$ whenever $H( \mu_0|m)> -  \infty$ or $H( \mu_1|m)> - \infty.$ This will always be assumed.

\item
Even the nonnegative quantity $ \mathcal{A}( \mu_0, \mu_1)-\max(H( \mu_0|m),H( \mu_1|m))\ge 0$  cannot be the 
 square of a distance such as the Wasserstein cost   $W_2^2$. 
As a matter of fact, considering the special situation where $ \mu_0= \mu_1= \mu,$ we have $ \mathcal A( \mu, \mu)\geq H( \mu|m)>0$  as soon as $ \mu$ differs from $m$. This is a consequence of Theorem~\ref{teo:teo} below.
\item
A good news about $ \mathcal A$ is that since $R$ is reversible, it is symmetric:  $ \mathcal A( \mu, \nu) = \mathcal A( \nu, \mu)$. To see this, let us denote  $X^*_t=X _{ 1-t}, 0\le t\le 1,$ and $Q^*:=(X^*)\pf Q$ the time reversal of any $Q\in\MO.$ As $X^*$  is one-one, we have $H(P|R)=H(P^*|R^*)$ and since we assume that $R^*=R,$ we see that 
\begin{equation}\label{eq-19}
H(P|R)=H(P^*|R),\quad \forall P\in\PO.
\end{equation}
 Hence, if $P$ solves~\eqref{eq-07} with $ (\mu_0, \mu_1)= (\mu, \nu),$ then ${X^*}\pf P$ solves~\eqref{eq-07} with $( \mu_0, \mu_1)=( \nu, \mu)$ and these Schrödinger problems share the same value. 
  \end{enumerate}
 \end{remarks}


\subsection*{Existence of a minimizer. Entropic interpolation}

We recall some general results  from  \cite[Thm.\,2.12]{leonard14} about the solution of the dynamical Schrödinger problem~\eqref{eq-07}. Let us denote by $p(x,y)$ the probability density introduced at~\eqref{dens}, at time $t=1$, so that 
\begin{equation*}
R _{ 01}(dxdy)=m(dx)p(x,y)m(dy).
\end{equation*}
In order for~\eqref{eq-07} to admit a unique solution, it is enough that it satisfies the following hypothesis:
\subsubsection*{Regularity hypothesis (Reg2)}\label{page-reg}
\begin{enumerate}[(i)]
\item $p(x,y)  \geq e^{-A(x)-A(y)}$ for some nonnegative measurable function $A$ on $\mathbb{R}^n;$
\item $\int_{\ZZ \times \ZZ} e^{-B(x)-B(y)}p(x,y)\,m(dx) m(dy)< \infty$ for some nonnegative measurable function $B$ on $\mathbb{R}^n;$
\item $\IZ (A+B)\, d\mu_0, \IZ (A+B)\, d\mu_1<\infty$ where $A$ appears at (i) and $B$ appears at (ii);
\item $- \infty<H(\mu_0|m), H(\mu_1|m)<\infty$;
\end{enumerate}
Assumptions (ii)-(iii) are useful to define rigorously $H(\mu_0|m)$ and  $H(\mu_1|m)$.  Under these assumptions the entropic cost  $\mathcal{A}(\mu_0,\mu_1)$ is finite and the minimizer $P$ of the Schrödinger problem~\eqref{eq-07} is characterized, in addition to the marginal constraints $P_0= \mu_0, P_1= \mu_1,$ by the product  formula
\begin{equation}\label{eq-06}
P=f_0(X_0)g_1(X_1)\,R
\end{equation}
for some measurable functions $f_0$ and $g_1$ on $\ZZ.$ The uniqueness of the solution is a direct consequence of the fact that~\eqref{eq-07} is a {\it strictly} convex minimization problem.

\begin{definition}[Entropic interpolation]
The {\it $R$-entropic interpolation} between $ \mu_0$ and $ \mu_1$ is defined as the marginal flow of the minimizer $P$ of~\eqref{eq-07}, that is $\mu_t :=P_t \in \PZ, 0\le t \le 1$.
\end{definition}

\begin{proposition}
Under the hypotheses (Exi), (Reg1) and (Reg2), the $R$-entropic interpolation between $ \mu_0$ and $ \mu_1$ is characterized by
\begin{equation}\label{entropic}
\mu_t = e^{\varphi_t+\psi_t}\,{m}, \quad 0\le t\le 1,
\end{equation}
where  
\begin{equation}\label{eq-16}
\varphi_t = \log T_t f_0,\quad  \psi_t = \log T_{1-t} g_1 ,\quad 0\le t\le 1,
\end{equation}
and the measurable functions  $f_0, g_1$ solve the following system
\begin{equation}\label{eq-10}
\frac{d \mu_0}{dm} = f_0 T_1 g_1,\quad \frac{d \mu_1}{dm}= g_1 T_1 f_0.
\end{equation}
\end{proposition}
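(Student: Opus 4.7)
The key input is the product-form characterization \eqref{eq-06} of the minimizer of the dynamical Schrödinger problem \eqref{eq-07}, borrowed from \cite[Thm.\,2.12]{leonard14}: under (Exi), (Reg1) and (Reg2) there exist measurable functions $f_0,g_1:\ZZ\to[0,\infty)$ such that $P = f_0(X_0)g_1(X_1)\,R$. Granted this, both assertions of the proposition reduce to computing the one-dimensional marginals $P_t = (X_t)_\# P$ by pushing the density $f_0(X_0)g_1(X_1)$ through $R$, using the Markov property and the reversibility of $R$.

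For the system \eqref{eq-10}, I write $R_{01}(dxdy) = p(x,y)\,m(dx)m(dy)$ and marginalize in $y$:
$$\frac{d\mu_0}{dm}(x) = f_0(x)\int_\ZZ g_1(y)p(x,y)\,m(dy) = f_0(x)\,T_1g_1(x).$$
The formula for $d\mu_1/dm$ is obtained analogously; here one invokes the $m$-reversibility of $R$, which forces the symmetry $p(x,y)=p(y,x)$ of the density kernel, in order to rewrite the $x$-integral as $T_1f_0(y)$. This yields $d\mu_1/dm = g_1\,T_1f_0$.

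For the interpolation formula \eqref{entropic}, I use that $R$ is Markov with initial law $m$ to disintegrate the joint law of $(X_0,X_t,X_1)$ as
$$R(X_0\in dx_0,\,X_t\in dy,\,X_1\in dx_1) = m(dx_0)\,p_t(x_0,y)\,m(dy)\,p_{1-t}(y,x_1)\,m(dx_1).$$
For any bounded measurable test function $h$ on $\ZZ$,
$$\int h\,d\mu_t \;=\; \int h(X_t)f_0(X_0)g_1(X_1)\,dR \;=\; \int h(y)\Bigl[\int f_0(x_0)p_t(x_0,y)m(dx_0)\Bigr]\Bigl[\int g_1(x_1)p_{1-t}(y,x_1)m(dx_1)\Bigr]m(dy).$$
Reversibility turns the first bracket into $T_tf_0(y)$ and the second bracket is $T_{1-t}g_1(y)$ by definition \eqref{dens}, so $d\mu_t/dm = T_tf_0\cdot T_{1-t}g_1 = e^{\varphi_t+\psi_t}$ with $\varphi_t,\psi_t$ as in \eqref{eq-16}.

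The only delicate point is technical: since $R\in\MO$ is only $\sigma$-finite and $m$ need not be finite, the Fubini-type manipulations and disintegrations above are not automatic. This is precisely what (Reg1)–(Reg2) are designed to handle: (Reg1) provides a bona fide density kernel $p_t$ for every $t>0$, while the lower bound on $p$ in (Reg2)(i) combined with the integrability conditions (ii)–(iii) ensures that $f_0,g_1$ can be chosen strictly positive $m$-almost everywhere and that all the integrands above are integrable, so that $\varphi_t=\log T_tf_0$ and $\psi_t=\log T_{1-t}g_1$ are well-defined finite functions throughout $[0,1]$.
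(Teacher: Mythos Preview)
Your proposal is correct and takes essentially the same approach as the paper: the paper does not give a formal proof but merely remarks after the proposition that the Schr\"odinger system ``simply expresses the marginal constraints'' and that the functions $f_0,g_1$ are those appearing in the product formula~\eqref{eq-06}, which is precisely what you make explicit by computing the time-marginals of $P=f_0(X_0)g_1(X_1)\,R$ via the Markov property and reversibility. Your discussion of the technical integrability issues under (Reg1)--(Reg2) goes beyond what the paper spells out and is a welcome addition.
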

The system~\eqref{eq-10} is often called the Schrödinger system. It simply expresses the marginal constraints. Its solutions $(f_0,g_1)$ are precisely the functions that appear in the identity~\eqref{eq-06}. Actually it is difficult or impossible to solve explicitly the system~\eqref{eq-10}. However, in Section~\ref{sec-examples},  we will see some particular examples  in the Gaussian setting where the system admits an explicit solution. Some numerical  algorithms have been proposed recently in~\cite{algo-dauphine}.

In our setting where $R$ is the Kolmogorov path measure defined at~\eqref{eq-11},   the entropic interpolation  $\mu_t$ admits a density $ \mu_t(z):=d \mu_t/dz$ with respect to the Lebesgue measure. It is important to notice that, contrary to the McCann interpolation, the   $(t,x)\mapsto \mu_t(x)$ is smooth on $(t,x)\in]0,1[\times \R$ and solves the transport equation
\begin{equation}\label{eq-12}
\partial_t \mu_t + \nabla \cdot (\mu_t\, v^{cu}(t, \mu_t))=0
\end{equation}
with the initial condition $ \mu_0$ and  where   $v^{cu}(t, \mu_t,z)=\nabla \psi_t(z)-\nabla V(z)/2+\nabla \log \mu_t(z)/2  $  refers to the {\it current velocity} introduced by Nelson in~\cite[Chap.\,11]{nelson} (this will be recalled at Section~\ref{sec-sbb}).   The current velocity is a smooth function and the ordinary differential equation 
$$
\dot x_t(x)=v^{cu}(t,x_t(x)), \quad x_0=x
$$
admits a unique solution for any initial position $x$, the solution of the continuity equation~\eqref{eq-12} admits the following push-forward expression:
\begin{equation}\label{eq-20}
\mu_t= (x_t)\pf \mu_0, \quad 0 \leq t \leq 1,
\end{equation}
in analogy with the displacement interpolation given at~\eqref{mccan}.

\begin{remark}[From the entropic cost to the Wasserstein cost]
\label{rem-entropic-wasserstein}
The Wasserstein distance is a limit case of the entropic cost. We shall  use this result to compare contraction properties in  Section~\ref{sec-contraction} and also to illustrate the examples in Section~\ref{sec-examples}.
\\
Let us consider the following dilatation  in time with ratio $\varepsilon>0$ of the reference path measure $R$: $R ^{  \epsilon}:=(X^ \epsilon)\pf R$ where $X^ \epsilon(t):=X _{  \epsilon t},$ $0\le t\le 1.$  It is shown in  \cite{leonard12} that  some renormalization of the  entropic cost $ \mathcal{A} ^{ R ^ \epsilon}$ converges  to the Wasserstein distance when $\varepsilon$ goes to 0. Namely,
\begin{equation}\label{eq-gamma-convergence}
\lim_{\varepsilon \to 0}\varepsilon \mathcal A^{R^ \epsilon}(\mu_0, \mu_1) = W_2^2(\mu_0, \mu_1)/2.
\end{equation} 
Even better, when   $ \mu_0$ and $\mu_1$ are absolutely continuous,  the entropic interpolation $( \mu ^{ R^ \epsilon}_t) _{ 0\le t\le 1}$ between $ \mu_0$ and $ \mu_1$ converges as $ \epsilon$ tends to zero towards the McCann displacement interpolation $( \mu_t) _{ 0\le t\le 1},$ see~\eqref{mccan}.
\end{remark}

\section{Kantorovich dual equality for the entropic cost}
\label{sec-kantorovich}

We derive the analogue of the Kantorovich dual equality~\eqref{eq-05} when the Wasserstein cost is replaced by the entropic cost.

\begin{theorem}[Kantorovich dual equality for the entropic cost]
\label{res-02}
Let $V,\mu_0$ and $\mu_1$ be such that the hypothesis (Exi), (Reg1) and (Reg2) stated in Section~\ref{sec-schrodinger} are satisfied. We have 
\begin{equation*}
\mathcal A(\mu_0, \mu_1)=H( \mu_0|m)
+\sup\left\{\IZ \psi\, d \mu_1-\IZ \mathcal{Q}^{ R} \psi\, d \mu_0;\  \psi\in C_{b}(\ZZ)\right\}
\end{equation*}
where for every $\psi\in C_{b}(\ZZ)$,
$	
\mathcal{Q}^{ R} \psi(x):=\log E _{ R^x} e ^{ \psi(X_1)}=\log T_1(e^\psi)(x),\ x\in\ZZ.
$	

\end{theorem}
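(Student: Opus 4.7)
The plan is to prove the two inequalities separately, in the spirit of classical Kantorovich duality. Introduce the abbreviation
\begin{equation*}
S(\mu_0,\mu_1) := H(\mu_0|m) + \sup_{\psi \in C_b(\ZZ)} \Big\{\IZ \psi\, d\mu_1 - \IZ \mathcal{Q}^R \psi\, d\mu_0\Big\}.
\end{equation*}

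For the lower bound $\mathcal{A}(\mu_0,\mu_1) \geq S(\mu_0,\mu_1)$, I would observe that any coupling $\pi$ with $H(\pi|R_{01})<\infty$ is absolutely continuous with respect to $R_{01}$. Since $R_x := (X_1)_\#R(\cdot|X_0=x)$ has density $p(x,\cdot)$ with respect to $m$ and is a probability measure, one can disintegrate $\pi(dxdy) = \mu_0(dx)\pi^x(dy)$ and $R_{01}(dxdy)=m(dx)R_x(dy)$, and apply the chain rule for relative entropy to get
\begin{equation*}
H(\pi|R_{01}) = H(\mu_0|m) + \IZ H(\pi^x|R_x)\,\mu_0(dx).
\end{equation*}
The Donsker-Varadhan / Young inequality applied to each $H(\pi^x|R_x)$ with test function $\psi \in C_b(\ZZ)$ gives $H(\pi^x|R_x) \geq \int \psi\,d\pi^x - \mathcal{Q}^R\psi(x)$. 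Integrating against $\mu_0$ and using the second marginal constraint yields
\begin{equation*}
H(\pi|R_{01}) \geq H(\mu_0|m) + \IZ \psi\, d\mu_1 - \IZ \mathcal{Q}^R \psi\, d\mu_0,
\end{equation*}
and taking the infimum over $\pi$ and the supremum over $\psi$ closes this direction.

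For the upper bound, I would use the explicit form of the Schrödinger optimizer: by~\eqref{eq-06}, $d\pi^*/dR_{01}(x,y) = f_0(x) g_1(y)$, so
\begin{equation*}
\mathcal{A}(\mu_0,\mu_1) = H(\pi^*|R_{01}) = \IZ \log f_0\, d\mu_0 + \IZ \log g_1\, d\mu_1.
\end{equation*}
The Schrödinger system~\eqref{eq-10} reads $d\mu_0/dm = f_0 \cdot T_1 g_1$, hence $\log f_0 = \log(d\mu_0/dm) - \log T_1 g_1$, and substituting yields
\begin{equation*}
\mathcal{A}(\mu_0,\mu_1) = H(\mu_0|m) + \IZ \log g_1\, d\mu_1 - \IZ \mathcal{Q}^R(\log g_1)\, d\mu_0,
\end{equation*}
so the value of the supremum is formally attained at the candidate $\psi^\star = \log g_1$.

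The main obstacle I anticipate is that $\log g_1$ is generally neither bounded nor continuous, and cannot simply be plugged into the supremum over $C_b(\ZZ)$. One must approximate it by a sequence $\psi_n \in C_b(\ZZ)$ obtained for instance by double truncation $\psi^\star \wedge n \vee (-n)$ followed by mollification, and verify that
\begin{equation*}
\IZ \psi_n\, d\mu_1 \to \IZ \log g_1\, d\mu_1,\qquad \IZ \log T_1(e^{\psi_n})\, d\mu_0 \to \IZ \log T_1 g_1\, d\mu_0.
\end{equation*}
The first convergence is essentially $L^1(\mu_1)$-integrability of $\log g_1$, which follows from the finiteness of the entropic cost and (Reg2)(iv). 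The second is more delicate because of the nonlinear exponential-logarithm composition; here the two-sided kernel bound $p(x,y) \geq e^{-A(x)-A(y)}$ from (Reg2)(i), combined with the $(A,B)$-integrability (Reg2)(iii), provides the uniform domination that legitimates monotone or dominated convergence for $\log T_1(e^{\psi_n})$. Completion of this approximation step yields $\mathcal{A}(\mu_0,\mu_1) \leq S(\mu_0,\mu_1)$ and closes the duality.
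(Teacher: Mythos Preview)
Your proof takes a genuinely different route from the paper's. The paper does \emph{not} split into two inequalities: instead it invokes an abstract Fenchel--Rockafellar dual equality (Lemma~\ref{res-01}) applied to the log-Laplace functional
\[
\Phi(u)=\IZ \log\!\Big(\IO e^{u}\,dR^x\Big)\,\mu_0(dx),\qquad u\in C_b(\Omega),
\]
with the linear constraint $A^\dagger\psi=\psi(X_1)$. Once one checks that $\Phi$ is convex and $\sigma(C_b(\Omega),C_b(\Omega)^*)$-lower semicontinuous, and that $\Phi^*$ has compact sublevel sets (both routine from the bound $\Phi(u)\le\|u\|$), the lemma delivers the equality in one stroke, without ever producing a near-maximizing $\psi$.

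Your lower bound via the chain rule $H(\pi|R_{01})=H(\mu_0|m)+\IZ H(\pi^x|R_x)\,\mu_0(dx)$ and Donsker--Varadhan is perfectly correct and arguably more transparent than the abstract machinery. The difference in strategy shows up in the upper bound. Your plan is to exhibit the value by plugging in $\psi^\star=\log g_1$ and then approximating in $C_b(\ZZ)$; the paper avoids this entirely. The approximation you sketch is where the real work lies, and it is not yet complete: after truncation $\psi_n=(\log g_1)\wedge n\vee(-n)$ you still need (i) to pass from bounded \emph{measurable} to bounded \emph{continuous} (mollification interacts nontrivially with the nonlinear map $\psi\mapsto\log T_1 e^\psi$), (ii) a $\mu_0$-integrable upper envelope for $\log T_1(e^{\psi_n})$ to run reverse Fatou, and (iii) separate finiteness of $\IZ\log f_0\,d\mu_0$ and $\IZ\log g_1\,d\mu_1$, not just of their sum. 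These can likely be extracted from (Reg2)(i)--(iv), but the argument is delicate and you have only indicated the ingredients. The abstract-duality approach buys you precisely the ability to bypass this approximation; your approach buys a self-contained argument that does not rely on an external convex-analysis lemma, at the price of having to control these limits explicitly.
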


This result was obtained by Mikami and Thieullen in \cite{MT06} with an alternate statement and a  different proof. The present proof is based on an abstract dual equality which is stated below at Lemma~\ref{res-01}. Let us first describe the setting of this lemma.

Let $\mathbb U$ be a vector space and $\Phi:\mathbb U \to (- \infty, \infty]$ be an extended real valued function on $\mathbb U.$  Its convex conjugate $\Phi^*$ on the algebraic dual space  $\mathbb U^*$  of $\mathbb U$ is defined  by 
$$
\Phi^*(\ell):=\sup _{ u\in \mathbb U}\left\{ \left\langle \ell,u \right\rangle _{\mathbb U^*,\mathbb U} -\Phi(u)\right\} \in[- \infty, \infty],\qquad \ell\in \mathbb U^*.
$$
We consider a linear map $A:\mathbb U^*\to \mathbb V^*$ defined on $\mathbb U^*$ with values in the algebraic dual space $\mathbb V^*$ of some   vector space $\mathbb V.$ 

\begin{lemma}[Abstract dual equality]\label{res-01}
We assume that:
\begin{enumerate}[(a)]
\item
$\Phi$ is a convex  lower  $\sigma(\mathbb U,\mathbb U^*)$-semicontinuous function and there is some $\ell_o\in \mathbb U^*$ such that for all $u\in \mathbb U,$ $ \Phi(u)\ge \Phi(0)+ \left\langle \ell_o,u \right\rangle _{\mathbb U^*,\mathbb U}$ ;
\item
$\Phi^*$ has $ \sigma(\mathbb U^*,\mathbb U)$-compact level sets: $ \left\{ \ell\in \mathbb U^*: \Phi^*(\ell)\le a\right\} ,$ $a\in\R$;
\item

The algebraic adjoint $A^\dagger $ of $A$ satisfies $A^\dagger \mathbb V\subset \mathbb U.$
\end{enumerate}
Then,  the dual equality 
\begin{equation}\label{eq-02}
\inf \left\{ \Phi^*(\ell); \ell\in \mathbb U^*, A\ell=v^*\right\} 
= \sup _{ v\in \mathbb V} \left\{ \left\langle v,v^* \right\rangle _{ \mathbb V,\mathbb V^*}- \Phi(A^\dagger v)\right\} \in (- \infty, \infty]
\end{equation}
holds true for any $v^*\in \mathbb V^*.$
\end{lemma}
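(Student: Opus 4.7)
The plan is to recognize Lemma \ref{res-01} as a standard Fenchel--Rockafellar type convex duality statement and to prove it in two steps: an easy weak duality, then strong duality by Fenchel--Moreau biconjugation of the value function, using assumption (b) to close the gap.

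First I would establish the inequality ``$\ge$'' in \eqref{eq-02} (weak duality). For any $\ell\in\mathbb{U}^*$ with $A\ell=v^*$ and any $v\in\mathbb{V}$, assumption (c) guarantees $A^\dagger v\in\mathbb{U}$, so the very definition of $\Phi^*$ and the adjoint relation yield
$$
\Phi^*(\ell)\ \ge\ \langle\ell,A^\dagger v\rangle_{\mathbb{U}^*,\mathbb{U}}-\Phi(A^\dagger v)\ =\ \langle v,A\ell\rangle_{\mathbb{V},\mathbb{V}^*}-\Phi(A^\dagger v)\ =\ \langle v,v^*\rangle-\Phi(A^\dagger v).
$$
Taking the infimum over feasible $\ell$ and the supremum over $v$ gives the inequality (and covers the case where the feasibility set is empty, since both sides are then read in $(-\infty,+\infty]$).

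For the reverse inequality, I would study the value function $F(v^*):=\inf\{\Phi^*(\ell):A\ell=v^*\}$ on $\mathbb{V}^*$. Convexity of $F$ is immediate from convexity of $\Phi^*$ and linearity of $A$. A direct recombination of two suprema yields the conjugate with respect to the duality $\langle\mathbb{V},\mathbb{V}^*\rangle$:
$$
F^*(v)\ =\ \sup_{v^*\in\mathbb{V}^*}\{\langle v,v^*\rangle-F(v^*)\}\ =\ \sup_{\ell\in\mathbb{U}^*}\{\langle A^\dagger v,\ell\rangle-\Phi^*(\ell)\}\ =\ \Phi^{**}(A^\dagger v).
$$
By (a), $\Phi$ is convex, $\sigma(\mathbb{U},\mathbb{U}^*)$-lower semicontinuous, and bounded below by an affine function; since the topological dual of $(\mathbb{U},\sigma(\mathbb{U},\mathbb{U}^*))$ is exactly $\mathbb{U}^*$, the Fenchel--Moreau biconjugation theorem applies and gives $\Phi^{**}=\Phi$. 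Hence $F^*(v)=\Phi(A^\dagger v)$, and the desired identity reduces to the \emph{primal} biconjugation $F=F^{**}$.

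The main obstacle is precisely $F=F^{**}$, which requires $F$ to be $\sigma(\mathbb{V}^*,\mathbb{V})$-lower semicontinuous. This is exactly where assumption (b) intervenes. Given a net $v^*_\alpha\to v^*$ with $F(v^*_\alpha)\le a$, I would choose almost minimizers $\ell_\alpha$ with $A\ell_\alpha=v^*_\alpha$ and $\Phi^*(\ell_\alpha)\le a+\varepsilon$; by (b) the sublevel set $\{\Phi^*\le a+\varepsilon\}$ is $\sigma(\mathbb{U}^*,\mathbb{U})$-compact, so a subnet converges to some $\ell$, while the map $\ell\mapsto A\ell$ is $\sigma(\mathbb{U}^*,\mathbb{U})$-to-$\sigma(\mathbb{V}^*,\mathbb{V})$ continuous (its coordinates are $\ell\mapsto\langle v,A\ell\rangle=\langle A^\dagger v,\ell\rangle$ with $A^\dagger v\in\mathbb{U}$ by (c)), forcing $A\ell=v^*$; lower semicontinuity of $\Phi^*$ (it is a pointwise sup of affine functions) then gives $\Phi^*(\ell)\le a+\varepsilon$, so $F(v^*)\le a+\varepsilon$, and letting $\varepsilon\downarrow0$ concludes. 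An equivalent route, which I would mention as an alternative, is to apply Sion's minimax theorem to the Lagrangian $L(v,\ell)=\Phi^*(\ell)+\langle v,v^*-A\ell\rangle$ on each compact level set $K_a=\{\Phi^*\le a\}$ (where $L$ is affine in $v\in\mathbb{V}$ and convex $\sigma(\mathbb{U}^*,\mathbb{U})$-lsc in $\ell$) and to pass to the limit $a\to\infty$.
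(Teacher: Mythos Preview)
Your argument is correct: weak duality is immediate, the computation $F^*(v)=\Phi^{**}(A^\dagger v)=\Phi(A^\dagger v)$ is clean, and the use of assumption~(b) to extract convergent subnets of almost-minimizers and thereby prove $\sigma(\mathbb V^*,\mathbb V)$-lower semicontinuity of the value function $F$ is the right mechanism to close the duality gap via $F=F^{**}$.

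However, the route you take is genuinely different from the paper's. The paper does not argue directly at all: it observes that the special case $\Phi(0)=0$, $\ell_o=0$ is exactly \cite[Thm.\,2.3]{Leo01c}, and then reduces the general case to that one by the affine shift $\Psi(u):=\Phi(u)-\Phi(0)-\langle\ell_o,u\rangle$, checking that $\Psi$ inherits convexity and lower semicontinuity while $\Psi^*(\ell)=\Phi^*(\ell_o+\ell)+\Phi(0)$ inherits compact sublevel sets; one line of algebra then shows that the dual equality for $\Psi$ with constraint $A\ell=v^*-A\ell_o$ is exactly \eqref{eq-02}. So the paper's proof is a two-line reduction to a black box, whereas yours is a self-contained Fenchel--Rockafellar argument that actually \emph{explains} why the hypotheses are what they are (in particular, why (b) is needed). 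Your version is more informative; the paper's is shorter but opaque without access to the cited reference.
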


\begin{proof}[Proof of Lemma~\ref{res-01}]
 In the special case where $ \Phi(0)=0$ and $\ell_o=0$, this result is~\cite[Thm.\,2.3]{Leo01c}.  Considering $ \Psi(u):= \Phi(u)-[ \Phi(0)+ \left\langle \ell_o,u \right\rangle ],$ $u\in \mathbb U$, we see that $\inf\Psi=\Psi(0)=0$,  $\Psi$ is a convex  lower  $\sigma(\mathbb U,\mathbb U^*)$-semicontinuous function and $ \Psi^*(\ell)= \Phi^*(\ell_o+\ell)+ \Phi(0),$ $\ell\in \mathbb U^*,$ has $ \sigma(\mathbb U^*,\mathbb U)$-compact level sets. As $ \Psi^*$ satisfies the assumptions of~\cite[Thm.\,2.3]{Leo01c}, we have the dual equality
\begin{equation*}
\inf \left\{ \Psi^*(\ell); \ell\in \mathbb U^*, A\ell=v^*-A\ell_o\right\} 
= \sup _{ v\in \mathbb V} \left\{ \left\langle v,v^*-A\ell_o \right\rangle _{ \mathbb V,\mathbb V^*}- \Psi(A^\dagger v)\right\} \in[0, \infty]
\end{equation*}
which is~\eqref{eq-02}.
\end{proof}
\medskip

\begin{proof}[Proof of Theorem~\ref{res-02}]
Let us denote 
\begin{equation*}
\Rm(\cdot):=\IZ R_x(\cdot)\, \mu_0(dx)\in\PO.
\end{equation*}
$\Rm$ is a measure on paths, its initial marginal, as a probability measure in $\R^n$, is $R_{\mu_0,0}= \mu_0$. When $ \mu_0=m,$ we have   $R_m=R$.    If $U$ is a functional on paths, 
\begin{multline*}
E_{\Rm}(U)=\int E_R(U|X_0=x)\mu_0(dx)=\int E_R(U|X_0=x)\frac{d\mu_0}{dR_0}(x)R_0(dx)\\
=E_R\big(E_R(U\frac{d\mu_0}{dm}(X_0)|X_0)\big)=E_R\big(U\frac{d\mu_0}{dm}(X_0)\big).
\end{multline*}
So 
$\displaystyle{
\Rm(\cdot)=  \frac{d \mu_0}{dm}(X_0) \,R(\cdot),}$
we see that for any $P\in\PO$ such that $P_0= \mu_0,$ 
\begin{equation}\label{eq-09}
  H(P|R)=  H( \mu_0|m)+  H(P|\Rm).
\end{equation}
Consequently, the minimizer of~\eqref{eq-07} is also the minimizer of 
\begin{equation}\label{eq-08}
H(P|\Rm)\to \mathrm{min};\quad  P\in\PO: P_0= \mu_0, P_1= \mu_1
\end{equation}
and 
\begin{equation}\label{eq-13}
\mathcal A(\mu_0,\mu_1)=H( \mu_0|m)+\inf \{H(P|\Rm),\,\, P\in\PO: P_0= \mu_0, P_1= \mu_1
\}.
\end{equation} 
Therefore, all we have to prove is 
\begin{multline*}
\inf \{H(P|\Rm),\,\, P\in\PO: P_0= \mu_0, P_1= \mu_1
\}=\\
\sup\left\{\IZ \psi\, d \mu_1-\IZ \mathcal{Q}^{ R} \psi\, d \mu_0,\  \psi\in C_{b}(\ZZ)\right\}.
\end{multline*}
This  is an application of  Lemma~\ref{res-01} with $\mathbb U=C_b(\OO)$, $\mathbb V=C_b(\ZZ)$ and
\begin{eqnarray*}
\Phi(u)&=&\int _{ \ZZ}\log \left( \IO e^u\,dR^x \right) \, \mu_0(dx),
\quad u\in C_b(\OO),\\
A^\dagger  \psi &=&\psi (X_1)\in C_b(\OO),
\quad \psi \in C_b(\ZZ).
\end{eqnarray*}
Let  $C_b(\OO)'$ be the topological dual space  of $(C_b(\OO),\|\cdot\|)$ equipped with the uniform norm $\| u\|:=\sup _\OO| u|.$
It is shown at \cite[Lemma~4.2]{leonard12} that for any $\ell\in C_b(\OO)',$
\begin{equation}\label{eq-01}
 \Phi^*(\ell) =
 \left\{
 \begin{array}{ll}
  H(\ell|\Rm),& \textrm{if }\ell\in \mathcal P(\Omega) \textrm{ and }(X_0)\pf\ell=\mu_{0}\\
 + \infty,& \textrm{otherwise}
 \end{array}\right.
\end{equation}
But according to \cite[Lemma 2.1]{Leo01a}, the effective domain $\left\{\ell\in C_b(\OO)^*: \Phi^*(\ell)< \infty\right\}$ of $\Phi^*$ is a subset of $C_b(\OO)'. $ Hence, for any $\ell$ in the algebraic dual $C_b(\OO)^*$ of $C_b(\OO),$ $\Phi^*(\ell)$ is given by~\eqref{eq-01}.

The assumption (c) of Lemma~\ref{res-01} on $A^\dagger$ is obviously satisfied.
Let us show  that $ \Phi$ and $ \Phi^*$ satisfy the assumptions (a) and (b). 
\\ 
Let us start with (a). It  is a standard result of the large deviation theory that $u\mapsto \log\IO e^u\,dR^x  $ is convex (a consequence of Hölder's inequality). It follows that $ \Phi$ is also convex. As $ \Phi$ is upper bounded on a neighborhood of $0$ in $(C_b(\OO),\|\cdot\|):$
\begin{equation}
\label{eq-maximum-phi}
\sup _{ u\in C_b(\OO), \|u\|\le 1} \Phi(u)\le 1 < \infty
\end{equation}
(note that $\Phi$ is increasing and $\Phi(1)=1$)  and its effective domain is the whole space $\mathbb U=C_b(\OO)$, it is $\|\cdot\|$-continuous everywhere. Since $ \Phi$ is convex, it is  also lower $ \sigma(C_b(\OO),C_b(\OO)')$-semicontinuous and a fortiori lower $ \sigma(C_b(\OO),C_b(\OO)^*)$-semicontinuous. 
Finally, a direct calculation shows that $\ell_o=R _{  \mu_0}$ is 	a subgradient of $ \Phi$ at 0. This completes the verification of (a).

The assumption  (b) is also satisfied because the upper bound~\eqref{eq-maximum-phi} implies that the level sets of $ \Phi^*$ are $ \sigma(C_b(\OO)^*,C_b(\OO))$-compact, see \cite[Cor.\,2.2]{Leo01a}. So far, we have shown that the assumptions of Lemma~\ref{res-01} are satisfied.

It remains to show that    $A\ell=v^*$ corresponds to the final marginal constraint.  Since $\{ \Phi^*< \infty\}$ consists of probability measures, it is enough to specify the action of $A$ on the vector subspace $ \mathcal{M}_b(\OO)\subset C_b(\OO)^*$ of all bounded  measures on $\OO$. For any $Q\in \mathcal{M}_b(\OO)$ and any $\psi \in C_b(\OO),$ we have 
$$ 
\left\langle \psi ,AQ \right\rangle _{  C_b(\ZZ),C_b(\ZZ)^*}=\left\langle A^\dagger \psi ,Q \right\rangle _{ C_b(\OO),C_b(\OO)^*}=\IO \psi (X_1)\,dQ=\int _{ \ZZ}\psi \,dQ_1.
$$ 
This means that for any $Q\in\mathcal{M}_b(\OO),$ $AQ=Q_1\in \mathcal{M}_b(\ZZ).$ 

With these considerations, choosing $v^*= \mu_1\in \mathcal{P}(\ZZ)$ in~\eqref{eq-02} leads us to 
\begin{alignat*}{1}
\inf \big\{ H(Q|R _{ \mu_0})&;Q\in\PO: Q_0= \mu_0, Q_1= \mu_1\big\} \\
&= \sup _{ \psi \in C_b(\ZZ)} \left\{ \int _{ \ZZ}\psi \, d \mu_1-\int _{ \ZZ}\log \left\langle e ^{ \psi (X_1)},R^x \right\rangle \, \mu_0(dx)\right\} 
\end{alignat*}
which is the desired identity.
\end{proof}

\begin{remark} 
Alternatively, considering $R^y:=R^{\mu_1}(\cdot\mid X_1=y)$, for $m$-almost all $x\in\ZZ$ and
\begin{equation*}
R^{\mu_1}(\cdot):=\IX R^y(\cdot)\, \mu_1(dy)\in\PO
\end{equation*}
we would obtain a formulation analogous to~\eqref{eq-kanto-classique}.
\end{remark}

\begin{remark}
We didn't use any specific property of the Kolmogorov semigroup. The  dual equality can be generalized, without changing its proof, to any reference path measure $R \in \PO$ on  any Polish state space $\mathcal X$. 
\end{remark}

\section{Benamou-Brenier formulation of the entropic cost}
\label{sec-sbb}

We derive  some analogue of the Benamou-Brenier formulation (\ref{bena}) for the entropic cost.

\begin{theorem}[Benamou-Brenier formulation of the entropic cost]\label{teo:teo}
Let $V,\mu_0$ and $\mu_1$ be such that hypothesis (Exi), (Reg1) and (Reg2) stated in Section~\ref{sec-schrodinger} are satisfied. We have
\begin{equation}\label{eq:aaa}
\mathcal{A}(\mu_0, \mu_1) = H(\mu_0|m) + \inf _{ (\nu,v)} \IiZ \frac{|v_t(z)|^2}{2} \, \nu_t(dz) dt, 
\end{equation}
where the infimum is taken over all $(\nu_t, v_t) _{ 0\le t\le 1}$ such that, $\nu_t(dz)$ is identified with its
  density with respect to Lebesgue measure $\nu(t,z):= d\nu_t/dz$,  satisfying $\nu_0=\mu_0$, $\nu_1=\mu_1$ and the following continuity equation 
\begin{equation}\label{equa}
\partial_t\nu+\nabla \cdot \left(\nu\left[v- \nabla(V+\log \nu)/2 \right] \right)=0,
\end{equation}
 is satisfied  in a weak sense.
  \\
Moreover,  these results still hold true when the infimum in~\eqref{eq:aaa} is taken among all $(\nu,v)$ satisfying~\eqref{equa}  and such that  $v$ is a gradient vector field, that is  $$v_t(z)=\nabla \psi_t(z),\quad 0\le t\le 1, z\in\ZZ,$$ for some  function $ \psi\in \mathcal{C} ^{ \infty}([0,1)\times\ZZ).$  
\end{theorem}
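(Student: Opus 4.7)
My plan is to use identity~\eqref{eq-13},
\begin{equation*}
\mathcal{A}(\mu_0,\mu_1) = H(\mu_0|m) + \inf\{H(P|\Rm) : P \in \PO,\ P_0 = \mu_0,\ P_1 = \mu_1\},
\end{equation*}
to reduce~\eqref{eq:aaa} to proving $\inf_P H(P|\Rm) = \inf_{(\nu,v)} \ud\int |v_t|^2\, d\nu_t\, dt$, which I obtain by two matching bounds tied together by Girsanov's theorem and the Fokker--Planck equation.

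For the inequality $\inf_{(\nu,v)} \ud\int |v_t|^2\, d\nu_t\, dt \le \inf_P H(P|\Rm)$, I exhibit a concrete admissible candidate arising from the Schr\"odinger minimizer $\hat P = f_0(X_0) g_1(X_1)\, R$. Doob's $h$-transform applied to $h(t,z) := T_{1-t} g_1(z) = e^{\psi_t(z)}$ shows that $\hat P$ is a Markov diffusion solving
\begin{equation*}
dX_t = \bigl(-\nabla V(X_t)/2 + \nabla \psi_t(X_t)\bigr) dt + dW_t,\quad X_0 \sim \mu_0,
\end{equation*}
and Girsanov's theorem gives
\begin{equation*}
H(\hat P|\Rm) = \ud \int_0^1 \int_{\ZZ} |\nabla \psi_t(z)|^2\, \mu_t(dz)\, dt,
\end{equation*}
while the Fokker--Planck equation for this SDE is exactly~\eqref{equa} with $v_t = \nabla \psi_t$. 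Hence the pair $(\mu_t, \nabla \psi_t)$ is admissible, saturates the kinetic infimum at $\mathcal{A}(\mu_0, \mu_1) - H(\mu_0|m)$, and is a \emph{gradient} vector field, which simultaneously handles the ``moreover'' part about the restriction to gradient fields.

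For the reverse inequality $\inf_P H(P|\Rm) \le \inf_{(\nu,v)} \ud \int |v_t|^2\, d\nu_t\, dt$, given any admissible $(\nu, v)$ I lift it to a diffusion by the SDE
\begin{equation*}
dX_t = \bigl(-\nabla V(X_t)/2 + v_t(X_t)\bigr) dt + dW_t,\quad X_0 \sim \mu_0.
\end{equation*}
Uniqueness of the associated Fokker--Planck equation forces the one-time marginal of its law $P^v$ to coincide with $\nu_t$, hence $P^v_1 = \mu_1$, and Girsanov yields $H(P^v|\Rm) = \ud \int |v_t|^2\, d\nu_t\, dt$. Taking the infimum over admissible pairs completes the matching bound and therefore the theorem.

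The main technical obstacle is the rigorous use of Girsanov's theorem. In the first step it requires enough regularity and integrability of $\nabla \psi_t = \nabla \log T_{1-t} g_1$, which will follow from the parabolic smoothing of $T_t$ under (Reg1)--(Reg2) and will in fact supply the smoothness $\psi \in \mathcal{C}^\infty([0,1)\times\ZZ)$ claimed in the ``moreover'' part. In the lifting step, a generic admissible $v$ need not be Lipschitz, and one has either to approximate $v$ by smooth vector fields or to invoke a superposition principle à la Ambrosio--Trevisan in order to produce a diffusion whose marginals are prescribed by the continuity equation~\eqref{equa}.
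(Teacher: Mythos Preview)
Your proposal is correct and follows essentially the same route as the paper: both reduce via \eqref{eq-13} to matching $\inf_P H(P|R_{\mu_0})$ with the kinetic infimum, use Girsanov's identity $H(P|R_{\mu_0})=\tfrac12\int|\beta^P|^2\,dP_t\,dt$, and invoke the Schr\"odinger minimizer with drift $\nabla\psi_t$ from \eqref{eq-16} for the gradient clause. The only organisational difference is that the paper obtains one inequality by producing an admissible pair $((P_t),\beta^P)$ from \emph{every} finite-entropy Markov $P$ via Girsanov rather than from $\hat P$ alone, while you are more explicit than the paper about the converse lifting $(\nu,v)\mapsto P^v$ and its technical caveats.
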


\begin{remarks}\ \begin{enumerate}[(1)]
\item
The continuity equation~\eqref{equa} is the linear Fokker-Planck equation 
\begin{equation*}
\partial_t\nu+\nabla \cdot \left(\nu\left[v- \nabla V/2 \right] \right)- \Delta\nu/2=0.
\end{equation*}
Its solution $(\nu_t) _{ 0\le t\le 1}$, with $v$ considered as a known parameter, is the time marginal flow $ \nu_t=P_t$ of a weak solution $P\in\PO$ (if it exists) of the stochastic differential equation
\begin{equation*}
dX_t=[v_t(X_t)-\nabla V(X_t)/2]\,dt + dW^P_t,\quad P\textrm{-a.s.}
\end{equation*}
where $W^P$ is a $P$-Brownian motion, $P_0=\mu_0$   and $(X_t) _{ 1\le t\le 1}$ is the canonical process.
\item
 Clearly, one can restrict  the infimum in the identity~\eqref{eq:aaa} to $(\nu,v)$ such that    
\begin{equation}\label{eq-majoration-energie}
\IiZ  |v_t(z)|^2\, \nu_t(dz)dt < \infty.
\end{equation}

\end{enumerate}\end{remarks}

\begin{proof}
 Because of~\eqref{eq-14} and~\eqref{eq-13}, all we have to show is
 \begin{equation*}
\inf \{  H(P|R _{ \mu_0}); \  P\in\PO: P_0= \mu_0, P_1= \mu_1\}=\inf _{ (\nu,v)} \IiZ \frac{|v_t(z)|^2}{2} \, \nu_t(dz) dt,
 \end{equation*}
 where $(\nu, v)$ satisfies~\eqref{equa}, $\nu_0= \mu_0$ and $ \nu_1= \mu_1$.
As $R _{ \mu_0}$ is Markov, by \cite[Prop.~2.10]{leonard14} we can restrict the infimum to the set of all  Markov measures $P \in \PO$ such that $P_0=\mu_0, P_1=\mu_1$ and $H(P|R _{ \mu_0})<\infty$. For each such Markov measure $P$, Girsanov's theorem (see for instance \cite[Thm.\,2.1]{leo12} for a proof related to the present setting) states that there exists a measurable vector field $\beta^P_t(z)$ such that 
\begin{equation}\label{eq-15}
dX_t=[\beta^P_t(X_t)-\nabla V(X_t)/2]\,dt+ \,dW^P_t,\quad P\as,
\end{equation}
where $W^P$ is a $P$-Brownian motion.
Moreover, $\beta^P$ satisfies 
$
E_P \int_0^1 |\beta^P_t|^2(X_t)dt < \infty 
$
and
\begin{equation}\label{eq:identita}
H(P|R _{ \mu_0}) = \frac{1}{2} \IiZ |\beta^P_t|^2(z) \,P_t(dz) dt.
\end{equation}
For any $P$ with $P_0= \mu_0$, $H( \mu_0|m)< \infty$ and $H(P|R _{ \mu_0})< \infty$, we have $P_t\ll R_t=m\ll \Leb$ for all $t$. Taking $ \nu=(P_t) _{ 0\le t\le 1}$ and $ v=\beta^P$, the stochastic differential equation~\eqref{eq-15} gives~\eqref{equa}
 and optimizing the left hand side of~\eqref{eq:identita} leads us to
 \begin{equation*}
 \inf \{  H(P|R _{ \mu_0}); \  P\in\PO: P_0= \mu_0, P_1= \mu_1\}
 \le\inf _{ (\nu,v)} \IiZ \frac{|v_t(z)|^2}{2} \, \nu_t(dz) dt.
 \end{equation*}
On the other hand, it is proved in \cite{Zam86,leonard14} that the solution $P$ of the Schrödinger problem~\eqref{eq-08} is such that~\eqref{eq-15} is satisfied with $ \beta^P_t(z)=\nabla \psi_t(z)$ where $ \psi$ is given in~\eqref{eq-16}. This completes the proof of the theorem.
\end{proof}

\begin{corollary}\label{res-03}
Let $V$, $\mu_0$ and $\mu_1$ be such that the hypotheses stated   in Section~\ref{sec-schrodinger} are satisfied. We have
\begin{alignat}{1}\label{eq-17}
\mathcal{A}(\mu_0, \mu_1) =\ud [H(\mu_0|m) +&H( \mu_1|m)]\\
&+ \inf _{ ( \rho,v)}  \IiZ  \left(\ud |v_t(z)|^2 + \frac{1}{8}|\nabla \log \rho_t(z)|^2\right)  \rho_t(z)\, m(dz) dt, \nonumber
\end{alignat}
where the infimum is taken over all $(\rho_t,v_t) _{ 0\le t\le 1}$ such that $\rho_0\,m=\mu_0$, $ \rho_1\, m=\mu_1$ and the following continuity equation 
\begin{equation}\label{eq-18}
\partial_t \rho+e ^{ V}\nabla \cdot \left(e ^{ -V} \rho v \right)=0
\end{equation}
 is satisfied  in a weak sense.
  \\
Moreover,  these results still hold true when the infimum in~\eqref{eq-17} is taken among all $(\nu,v)$ satisfying~\eqref{eq-18}  and such that  $v$ is a gradient vector field, that is  $$v_t(z)=\nabla \theta_t(z),\quad 0\le t\le 1, z\in\ZZ,$$ for some  function $ \theta\in \mathcal{C} ^{ \infty}([0,1)\times\ZZ).$  
\end{corollary}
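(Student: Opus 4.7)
My plan is to deduce the corollary from Theorem~\ref{teo:teo} via a time-symmetric change of vector field that trades the asymmetric $H(\mu_0|m)$ for the symmetric average $\tfrac{1}{2}(H(\mu_0|m)+H(\mu_1|m))$ by absorbing a cross term into the time derivative of the relative entropy.

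First I would reparametrize any candidate curve by its density $\rho_t := d\nu_t/dm$ with respect to the reversing measure $m=e^{-V}\Leb$. Since $\nabla\log\nu_t = \nabla\log\rho_t - \nabla V$, one has $\nabla(V+\log\nu_t)/2 = \nabla\log\rho_t/2$, so that introducing $v := w - \tfrac{1}{2}\nabla\log\rho$, where $w$ denotes the vector field of Theorem~\ref{teo:teo}, turns the continuity equation of that theorem into $\partial_t\nu+\nabla\cdot(\nu v)=0$, which after multiplication by $e^V$ is exactly~\eqref{eq-18}. This sets up a bijection between admissible pairs $(\nu,w)$ in Theorem~\ref{teo:teo} and admissible pairs $(\rho,v)$ here, preserving the endpoints $\mu_0,\mu_1$.

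Next I would expand
\[
\tfrac{1}{2}|w|^2 = \tfrac{1}{2}|v|^2 + \tfrac{1}{2}\,v\cdot\nabla\log\rho + \tfrac{1}{8}|\nabla\log\rho|^2,
\]
so that the only term of the kinetic action of Theorem~\ref{teo:teo} differing from the integrand of~\eqref{eq-17} is the cross term. The heart of the argument is the identity, valid along any sufficiently regular solution of $\partial_t\nu+\nabla\cdot(\nu v)=0$,
\[
\tfrac{d}{dt}H(\nu_t|m) = \int_{\ZZ} v_t\cdot\nabla\log\rho_t\,d\nu_t,
\]
obtained by writing $H(\nu_t|m)=\int\rho_t\log\rho_t\,dm$, differentiating under the integral sign and integrating by parts using~\eqref{eq-18}. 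Integration in $t$ then yields $\int_0^1\int_{\ZZ} v_t\cdot\nabla\log\rho_t\,d\nu_t\,dt = H(\mu_1|m)-H(\mu_0|m)$, which depends only on the endpoints and can therefore be pulled out of the infimum. Combining this constant with the leading $H(\mu_0|m)$ of Theorem~\ref{teo:teo} produces the symmetric average, and~\eqref{eq-17} follows.

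The gradient-field refinement transfers automatically: if the optimizer of Theorem~\ref{teo:teo} is $w_t=\nabla\psi_t$, then $v_t=\nabla\theta_t$ with $\theta_t:=\psi_t-\tfrac{1}{2}\log\rho_t$, and the smoothness and positivity of $\rho_t$ on $(0,1)\times\ZZ$ inherited from~\eqref{entropic}--\eqref{eq-16} guarantee $\theta\in\mathcal{C}^\infty([0,1)\times\ZZ)$. The main obstacle I anticipate is justifying the entropy derivative identity for a \emph{generic} admissible competitor and not merely for the optimal curve; I would handle this by restricting the infimum in~\eqref{eq-17} to pairs satisfying $\int_0^1\int_{\ZZ}(|v_t|^2+|\nabla\log\rho_t|^2)\,d\nu_t\,dt<\infty$ (outside this class both sides of~\eqref{eq-17} are infinite), which provides enough $L^2$-integrability to approximate by smooth compactly supported mollifications, pass to the limit in the integration by parts, and control behaviour at infinity using hypothesis~(Exi).
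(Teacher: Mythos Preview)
Your approach is correct but follows a genuinely different route from the paper's. The paper does not perform your change of variables and entropy-derivative computation; instead it exploits the time-reversal invariance $H(P|R)=H(P^*|R)$ directly at the path-measure level. It applies Girsanov's theorem to both $P$ and its time-reversal $P^*$, obtaining
\[
H(P|R)=H(\mu_0|m)+\tfrac12 E_P\!\int_0^1|\beta^P_t|^2\,dt,\qquad
H(P^*|R)=H(\mu_1|m)+\tfrac12 E_P\!\int_0^1|\beta^{P^*}_{1-t}|^2\,dt,
\]
takes the half-sum, and then uses Nelson's decomposition into current and osmotic velocities together with the reversal identity $v^{cu,P^*}_t=-v^{cu,P}_{1-t}$ to rewrite $\tfrac14(|\beta^P_t|^2+|\beta^{P^*}_{1-t}|^2)$ as $\tfrac12|v^{cu,P}_t|^2+\tfrac18|\nabla\log\rho^P_t|^2$. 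The gradient-field statement is handled exactly as you do, with $\theta=\psi-\tfrac12\log\rho^P$.

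The two arguments are in fact dual to one another: your entropy-derivative identity $\tfrac{d}{dt}H(\nu_t|m)=\int v_t\cdot\nabla\log\rho_t\,d\nu_t$ is the Eulerian shadow of the paper's probabilistic identity $H(P|R)=H(P^*|R)$. Your route is more self-contained at the variational level (you never introduce $P^*$ or $\beta^{P^*}$) and makes transparent that the passage from Theorem~\ref{teo:teo} to the corollary is a pure change of unknown plus a boundary term. The paper's route, on the other hand, sidesteps the regularity issue you flag: since both Girsanov representations come with the a priori bound $E_P\int_0^1|\beta|^2\,dt<\infty$ whenever $H(P|R)<\infty$, the algebraic identity between the two actions holds without any separate justification of integration by parts for generic competitors. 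It also makes the role of the reversibility hypothesis on $R$ explicit, and connects naturally to Nelson's stochastic mechanics.
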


\begin{remark}
The density $ \rho$ in the statement of the corollary must be understood as a density $ \rho= d \nu/dm$ with respect to the reversing measure $m$. Indeed, with $ \nu(t,z)=d \nu_t/dz$, we see that $ \nu= e ^{ -V} \rho$ and the evolution equation~\eqref{eq-18} writes as the current equation $ \partial_t \nu+\nabla\cdot ( \nu v)=0.$
\end{remark}

This result was proved recently by Chen, Georgiou and Pavon in \cite{CGP14} in the case where $V=0$ without any mention to gradient type vector fields. The present proof is essentially the same as in \cite{CGP14}: we take advantage of the time reversal invariance of the relative entropy $H(\cdot|R)$ with respect to the reversible path measure $R.$

\begin{proof}
The proof follows almost the same line as Theorem~\ref{teo:teo}'s one. The additional ingredient is the  time-reversal invariance~\eqref{eq-19}: $H(P|R)=H(P^*|R)$. Let $P\in\PO$ be the solution of~\eqref{eq-07}. We have already noted that $P^*$ is the solution of the Schrödinger problem where the marginal constraints $ \mu_0$ and $ \mu_1$ are inverted. We obtain
\begin{alignat*}{2}
dX_t&=v^P_t(X_t)\,dt+dW^P_t,\qquad&& P\as\\
dX_t&=v ^{ P^*}_t(X_t)\,dt+dW ^{ P^*}_t,&& P^*\as
\end{alignat*}
where $W^P$ and $W ^{ P^*}$ are respectively Brownian motions with respect to $P$ and $P^*$ and 
\begin{eqnarray*}
H(P|R)&=&H( \mu_0|m)+ E_P \ud\int_0^1 | \beta^P_t(X_t)|^2\,dt\\
H(P^*|R)&=& H( \mu_1|m)+\ud E _{ P^*} \int_0^1 | \beta ^{ P^*}_t(X_t)|^2\,dt
	= H( \mu_1|m)+\ud E _{ P} \int_0^1 | \beta ^{ P^*} _{ 1-t}(X_t)|^2\,dt
\end{eqnarray*}
with $ v^P=-\nabla V/2+ \beta^P$ and $v ^{ P^*}=-\nabla V/2+ \beta ^{ P^*}.$ Taking the half sum of the above equations, the identity  $H(P|R)=H(P^*|R)$ implies that
\begin{equation*}
H(P|R)=\ud[H( \mu_0|m)+H( \mu_1|m)]
	+  \frac{1}{4}E_P\int_0^1 (| \beta^P_t|^2+| \beta _{ 1-t} ^{ P^*}|^2)\,dt.
\end{equation*}
Let us introduce the current velocities of $P$ and $P^*$ defined by
\begin{alignat*}{3}
v ^{ cu,P}_t(z)&:=v^P_t(z)-\ud\nabla\log  \nu^P_t(z)\ 
	&=&\  \beta^P_t(z)-\ud\nabla\log  \rho^P_t(z)\\
v ^{ cu,P^*}_t(z)&:=v ^{ P^*}_t(z)-\ud\nabla\log  \nu ^{ P^*}_t(z)\ 
&=&\ \beta ^{ P^*}_t(z)-\ud\nabla\log  \rho ^{ P^*}_t(z)
\end{alignat*}
where for any $0\le t\le 1, z\in\ZZ,$  
$$ 
\nu^P_t(z):= \frac{dP_t}{dz},\ \rho^P_t(z):= \frac{dP_t}{dm}(z)\quad \textrm{and}\quad \nu ^{ P^*}_t(z):= \frac{dP^*_t}{dz},\ \rho_t ^{ P^*}(z):= \frac{dP^*_t}{dm}(z).
$$ 
The naming {\it current velocity} is justified by the current equations
\begin{alignat*}{5}
& \partial_t  \nu^P+ \nabla\cdot(\nu^P v ^{ cu,P})&=&\ 0 &\quad \textrm{and}\quad &  \partial_t  \rho^P+e^V \nabla\cdot(e ^{ -V} \rho^P v ^{ cu,P}) &= \ 0,\\
 &\partial_t  \nu ^{ P^*}+ \nabla\cdot(\nu ^{ P^*} v ^{ cu,P^*})\ &=&\ 0 &\quad \textrm{and}\quad &
 \partial_t  \rho ^{ P^*}+e^V \nabla\cdot(e ^{ -V} \rho ^{ P^*} v ^{ cu,P^*})\ &= \ 0.
 \end{alignat*}
 To see that  the first equation $ \partial_t  \nu^P+ \nabla\cdot(\nu^P v ^{ cu,P})=0$ is valid, remark that $ \nu^P$ satisfies the Fokker-Planck equation~\eqref{equa} with $v$ replaced by $ \beta^P.$ The  equation for $ \rho^P$  follows immediately and the equations for $ \nu ^{ P^*}$ and $ \rho ^{ P^*}$ are derived similarly.
\\
 The very definition of $P^*$ implies that $ \rho ^{ P^*}_t= \rho^P _{ 1-t}$ and the time reversal invariance $R^*=R$ implies that   
$$
v ^{ cu,P^*}_t(z)=-v ^{ cu,P}_{ 1-t}(z),\quad 0\le t\le 1, z\in\ZZ.
$$
Therefore, $ \beta ^{ P^*}_{ 1-t}=-v ^{ cu,P}_t+\ud\nabla\log \rho^P_t$ and 
$ \frac{1}{4}(| \beta_t ^P|^2+| \beta _{ 1-t} ^{ P^*}|^2)= \ud| v_t ^{ cu,P}|^2+ \frac{1}{8}|\nabla\log \rho_t^P|^2.$ This completes the proof of the first  statement of the corollary. 

For the second statement about $v=\nabla \theta,$ remark that as in Theorem~\ref{teo:teo}'s proof,  the solution $P$ of the Schrödinger problem is such that $ \beta^P=\nabla \psi$ for some smooth function $ \psi.$  One concludes with $v ^{ cu,P}= \beta^P-\ud\nabla \log \rho^P,$ by taking $ \theta= \psi-\log \sqrt{\rho^P}.$
\end{proof}

\begin{remarks}\ \begin{enumerate}[(1)]
\item
The current velocity $ v ^{ cu,P}$ of a diffusion path measure $P$ has been introduced by Nelson in \cite{nelson} together with its {\it osmotic velocity} $v ^{ os,P} := \ud \nabla\log \rho^P.$  
\item
Up to a multiplicative factor,  $\IZ |\nabla \log \rho_t(z)|^2\,  \rho_t(z)\, m(dz)$ is the entropy production or Fischer information. The average osmotic action is 
$A ^{ os}( P):=\IiZ \ud| v ^{ os,P}|^2\, dP_tdt=\IiZ  \frac{1}{8}|\nabla \log \rho|^2  \rho\, dm\,dt$ and it is directly connected to a variation of entropy. It's worth  remarking that by considering the dilatation in time of the reference path measure as introduced in Remark~\ref{rem-entropic-wasserstein}, the osmotic action vanishes in the limit for $\varepsilon \to 0$.
Let us define now the osmotic cost  
$$
I ^{ \mathrm{os}}( \mu_0, \mu_1):= \inf \{  A ^{ os}(P); P\in\PO:P_0= \mu_0, P_1=\mu_1\}
$$ 
and the current cost 
$I ^{ \mathrm{cu}}( \mu_0, \mu_1):= \inf _{ ( \rho,v)}  \IiZ \ud |v_t(z)|^2 \,  \rho_t(z)\, m(dz) dt$ where the infimum runs through all the $( \rho,v)$  satisfying~\eqref{eq-18}.  The standard Benamou-Brenier formula precisely states that $I ^{ \mathrm{cu}}( \mu_0, \mu_1)=W^2_2( \mu_0, \mu_1)/2.$  Therefore, Corollary~\ref{res-03} implies that
$$
\mathcal{A}(\mu_0, \mu_1) \ge\ud [H(\mu_0|m) +H( \mu_1|m)]+\ud W_2^2( \mu_0, \mu_1)+ I ^{ os}( \mu_0, \mu_1).
$$
\end{enumerate}\end{remarks}
 In particular, by the positivity of the entropic cost $I^{os}$ we obtain the following relation between the entropic  and  Wasserstein costs:
\begin{corollary}\label{cor-super}
Let $V,\mu_0$ and $\mu_1$ be such that the hypotheses stated in Section~\ref{sec-schrodinger} are satisfied. We have
$$
\mathcal{A}(\mu_0, \mu_1) \ge\ud [H(\mu_0|m) +H( \mu_1|m)]+\ud W_2^2( \mu_0, \mu_1).
$$
\end{corollary}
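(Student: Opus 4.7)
The strategy is to read the inequality directly off the Benamou--Brenier type representation provided by Corollary~\ref{res-03}, using only the fact that the osmotic piece of the integrand is nonnegative, and then recognizing the surviving infimum as $W_2^2/2$ via the classical Benamou--Brenier formula.

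More precisely, I would start from Corollary~\ref{res-03}, which gives
\begin{equation*}
\mathcal{A}(\mu_0,\mu_1) = \ud[H(\mu_0|m)+H(\mu_1|m)] + \inf_{(\rho,v)} \IiZ \left(\ud |v_t(z)|^2 + \frac{1}{8}|\nabla \log \rho_t(z)|^2\right) \rho_t(z)\, m(dz)\, dt,
\end{equation*}
with the infimum running over all pairs $(\rho,v)$ satisfying $\rho_0 m = \mu_0$, $\rho_1 m = \mu_1$, and the continuity equation $\partial_t\rho + e^V \nabla\cdot(e^{-V}\rho v)=0$. For any such admissible pair, the Fisher-information term $\frac{1}{8}|\nabla\log\rho_t|^2\,\rho_t$ is pointwise nonnegative, so the integrand dominates $\ud|v_t(z)|^2 \rho_t(z)$. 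Taking the infimum over the same constraint set on both sides,
\begin{equation*}
\inf_{(\rho,v)} \IiZ \left(\ud |v_t|^2 + \frac{1}{8}|\nabla \log \rho_t|^2\right) \rho_t\, dm\, dt \;\ge\; \inf_{(\rho,v)} \IiZ \ud|v_t|^2\, \rho_t\, dm\, dt \;=:\; I^{\mathrm{cu}}(\mu_0,\mu_1).
\end{equation*}

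To finish, I would identify $I^{\mathrm{cu}}(\mu_0,\mu_1)$ with $W_2^2(\mu_0,\mu_1)/2$ by rewriting the constraint in terms of $\nu_t(dz) := \rho_t(z)\, m(dz) = \rho_t(z) e^{-V(z)}\,dz$. A direct calculation shows that $\partial_t \rho + e^V \nabla\cdot(e^{-V}\rho v)=0$ is equivalent to the standard continuity equation $\partial_t \nu + \nabla\cdot(\nu v) = 0$, while $\int |v|^2\, \rho\, dm\, dt = \int |v|^2\, d\nu_t\, dt$. The constraint $\rho_0 m = \mu_0$, $\rho_1 m = \mu_1$ becomes $\nu_0 = \mu_0$, $\nu_1=\mu_1$. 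Therefore $I^{\mathrm{cu}}(\mu_0,\mu_1)$ coincides exactly with the Benamou--Brenier formulation~\eqref{bena} of $W_2^2(\mu_0,\mu_1)/2$, and the claimed inequality follows.

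The argument is essentially a tautology once Corollary~\ref{res-03} is in hand: no new hard estimate is required. The only mildly subtle point, which I would treat carefully, is to verify that the change of variable $\rho \leftrightarrow \nu = \rho e^{-V}$ maps the admissible set used in Corollary~\ref{res-03} bijectively onto the admissible set used in the classical Benamou--Brenier formula~\eqref{bena}, so that dropping the Fisher-information term and invoking~\eqref{bena} is legitimate in full generality under the hypotheses (Exi), (Reg1), (Reg2).
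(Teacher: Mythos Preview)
Your proposal is correct and follows essentially the same route as the paper: drop the nonnegative Fisher-information term in the representation of Corollary~\ref{res-03} and identify the remaining infimum with $W_2^2/2$ via the classical Benamou--Brenier formula. The paper states this identification $I^{\mathrm{cu}}(\mu_0,\mu_1)=W_2^2(\mu_0,\mu_1)/2$ without detail, so your explicit change of variables $\nu_t=\rho_t\,m$ showing that~\eqref{eq-18} becomes the standard continuity equation is a welcome addition rather than a deviation.
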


\section{Contraction with respect to the entropic cost}
\label{sec-contraction}

The analogy between optimal transport and minimal entropy can also be observed in the context of contractions.

As explained in the introduction, contraction in Wasserstein distance depends on the curvature. Even if there are actually many contraction inequalities in Wasserstein distance, we focus here on  two main results. The first one depends on the curvature and the second one includes the dimension. These results can be written for more general semigroups satisfying the curvature-dimension condition as  defined in the  Bakry-\'Emery theory. 

\bigskip

In the context of the  Kolmogorov semigroup of Section~\ref{sec-kolmo} with a generator given by~\eqref{eq-11} in $\ZZ$,  the two main contraction inequalities can be formulated as follows. 

\medskip
\begin{itemize}
\item   Let assume that for some real $\lambda,$ we have  ${\rm Hess}(V)\geq {\lambda}\,{\rm Id}$  in the sense of symmetric matrices. Then for any $f,g$ probability densities with respect to the measure $m$ and any $t\geq0$, 
\begin{equation}
\label{eq-contr-w}
W_2(T_t f\,m, T_t g\,m) \leq e^{-\frac{\lambda}{2} t}W_2(fm, gm).
\end{equation}
Let us recall that this result was proved in~\cite{renesse-sturm} in the general context of Riemannian manifold. Although in the context of Kolmogorov semigroups the proof is easy, its generalization for the entropic cost to a Riemannian setting is not trivial. 

\item   When $L=\Delta/2$ that is $V=0$, the heat equation in $\ZZ$ satifies the following  dimension dependent  contraction property: 
\begin{equation}\label{dim-contr}
W_2^2(T_t f \,\Leb, T_s g\, \Leb) \leq W_2^2(f\,\Leb, g\,\Leb) + n(\sqrt t - \sqrt s)^2,
\end{equation}
for any  $s,t\geq0$ and any $f,g$ probability densities with respect to  the Lebesgue measure $\Leb$.  This contraction was proved in a more  general context in \cite{bgl2,kuwada}.
\end{itemize}

The two inequalities~\eqref{eq-contr-w} and~\eqref{dim-contr} can be proved in terms of entropic cost. Let us choose the reference path measure $R$  associated with the potential $V$ and take $\varepsilon , u>0$ and   $\mu_0,\mu_1\in\PZ$.  In order to extend  for each $u,\varepsilon>0$ the dual formulation for the entropic cost of Theorem~\ref{res-02}, consider the semigroup $(T_{\varepsilon ut})_{t\geq0}$  and the corresponding path measure $R ^{ \epsilon u}$:  time is dilated by the factor $( \epsilon u) ^{ -1}$. Theorem~\ref{res-02} implies that  
\begin{equation*}
 \mathcal{A} ^{ R ^{ \epsilon u}}(\mu_0, \mu_1)= H( \mu_0|m)
+\sup\left\{\IZ \psi\, d \mu_1-\IZ \log T_{\varepsilon u}(e^{\psi})\, d \mu_0,\quad  \psi\in C_{b}(\ZZ)\right\}. 
\end{equation*}
Now by changing $\psi$ with $\psi/\varepsilon$ we see that  
\begin{equation}\label{duep}
\varepsilon\mathcal A^{ R ^{ \epsilon u}}(\mu_0, \mu_1)=\varepsilon H( \mu_0|m)
+\sup\left\{\IZ \psi\, d \mu_1-\IZ \mathcal{Q}^{ \varepsilon }_{u} \psi\, d \mu_0,\quad  \psi\in C_{b}(\ZZ)\right\}
\end{equation}
where for any  $\psi\in C_{b}(\ZZ)$, 
\begin{equation}
\label{def-hje}
\mathcal{Q}^{ \varepsilon }_{u} \psi=\varepsilon \log T_{\varepsilon u}(e^{\psi/\varepsilon}).
\end{equation}
For simplicity, we denote $\varepsilon\mathcal A^{R ^{ \varepsilon u}}=\mathcal A^{\varepsilon }_u$ and $\mathcal A^{\varepsilon}_1 = \mathcal A^{\varepsilon}$. 
\\
As  explained in Remark~\ref{rem-entropic-wasserstein}, we have 
\begin{equation}\label{gamma}
\lim_{\varepsilon \to 0} \mathcal A^{\varepsilon}_u(\mu_0, \mu_1) = W_2^2(\mu_0, \mu_1)/2u.
\end{equation}

The  entropic cost associated to the Kolmogorov semigroup has the following properties.

\begin{theorem}[Contraction in entropic cost]\label{thm-contr}
Let $\varepsilon>0$ be fixed.  
\begin{enumerate}[(a)]
\item If $V$ satisfies ${\rm Hess}(V)\geq \lambda\,{\rm Id}$ for some $\lambda\in\R$,  then for any $t \geq 0$,
\begin{equation}\label{contr-entr}
\mathcal A^{\varepsilon}_b (T_{u_t(b)} fm, T_t gm)\leq \mathcal A^{\varepsilon}_{v_t(b)} (fm,gm) + \varepsilon[H(T_{u_t(b)} fm|m) - H(fm|m)],
\end{equation}
where $f,g$ are probability densities with respect to $m$, and 
\begin{equation}
\label{def-uv}
\begin{array}{ll}
u_t(b)= t+\frac{1}{\lambda} \log \left(\frac{e^{-\varepsilon\lambda b}}{ 1+ e^{\lambda t}(e^{-\varepsilon \lambda b} - 1)}\right) & \quad  v_t(b) = -\frac{1}{\lambda \varepsilon} \log (1+ e^{\lambda t}(e^{-\varepsilon \lambda b} - 1)) 
\end{array}
\end{equation}
where:  if $\lambda \leq 0$, $b \in (0, \infty)$ and 
 if $\lambda > 0$, $b \in (0, -\frac{1}{\lambda \varepsilon}\log (1- e^{-\lambda t})).$
\item If $V=0$ then for any $t \geq 0$,
$$
\mathcal{A}^{\varepsilon} (T_t fm, T_s gm) \leq \mathcal A^{\varepsilon}(fm, gm) + \frac{n}{2}(\sqrt t - \sqrt s)^2 + \varepsilon [H(T_t fm|m)- H(fm|m)].
$$
\end{enumerate}
\end{theorem}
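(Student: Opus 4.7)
My plan is to prove both parts by applying the Kantorovich dual formula \eqref{duep} to the left-hand side, transferring the heat semigroups onto the test functions by reversibility of $m$ under $(T_r)_{r\geq0}$, and reducing the contraction to a pointwise commutation inequality between $T_r$ and $\mathcal{Q}^{\varepsilon}_{\cdot}$. A re-application of \eqref{duep} then identifies the entropic cost on the right-hand side.

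Concretely, for (a) I would apply \eqref{duep} to $\mathcal{A}^{\varepsilon}_b(T_{u_t(b)}fm,T_tgm)$ and use the reversibility identities $\int\psi\,T_tg\,dm=\int T_t\psi\cdot g\,dm$ and $\int\mathcal{Q}^{\varepsilon}_b\psi\,T_{u_t(b)}f\,dm=\int T_{u_t(b)}\mathcal{Q}^{\varepsilon}_b\psi\cdot f\,dm$ to rewrite the supremum over $\psi\in C_b(\ZZ)$. The key step is then the pointwise commutation inequality
\begin{equation}\label{eqn:comm-plan}
T_{u_t(b)}\bigl(\mathcal{Q}^{\varepsilon}_b\psi\bigr)\;\geq\;\mathcal{Q}^{\varepsilon}_{v_t(b)}\bigl(T_t\psi\bigr),
\end{equation}
which, via the change of variable $\tilde\psi:=T_t\psi$ and \eqref{duep} applied to $\mathcal{A}^{\varepsilon}_{v_t(b)}(fm,gm)$, yields \eqref{contr-entr} at once. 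For (b) the same reduction produces the sibling inequality $T_t\mathcal{Q}^{\varepsilon}\psi+\tfrac{n}{2}(\sqrt{t}-\sqrt{s})^2\geq\mathcal{Q}^{\varepsilon}T_s\psi$.

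To establish \eqref{eqn:comm-plan} I would exploit the Hamilton--Jacobi-type equation
$\partial_b\mathcal{Q}^{\varepsilon}_b\psi=\varepsilon L\mathcal{Q}^{\varepsilon}_b\psi+\tfrac12|\nabla\mathcal{Q}^{\varepsilon}_b\psi|^2$, obtained by differentiating $\mathcal{Q}^{\varepsilon}_b\psi=\varepsilon\log T_{\varepsilon b}(e^{\psi/\varepsilon})$ and using the identity $Le^f=e^f(Lf+\tfrac12|\nabla f|^2)$ with $L=\ud(\Delta-\nabla V\cdot\nabla)$. The standard device is to interpolate between the two sides of \eqref{eqn:comm-plan} via
\begin{equation*}
\Phi(r):=T_{\alpha(r)}\bigl(\mathcal{Q}^{\varepsilon}_{\beta(r)}T_{t-r}\psi\bigr),\qquad r\in[0,t],
\end{equation*}
with smooth $\alpha,\beta:[0,t]\to\R$ matching $\alpha(0)=0$, $\beta(0)=v_t(b)$, $\alpha(t)=u_t(b)$, $\beta(t)=b$, and to show that $\Phi'(r)\geq 0$. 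A direct computation using $\partial_aT_a=LT_a$, the Hamilton--Jacobi equation above, and the chain rule through $\mathcal{Q}^{\varepsilon}_\beta$ applied to the $r$-dependent argument $T_{t-r}\psi$ rewrites $\Phi'$ as a combination involving $|\nabla\mathcal{Q}^{\varepsilon}_\beta T_{t-r}\psi|^2$ and a weighted average of $LT_{t-r}\psi$; invoking the Bakry--\'Emery gradient bound $|\nabla T_a\phi|^2\leq e^{-\lambda a}T_a(|\nabla\phi|^2)$ (which is equivalent to $\mathrm{Hess}(V)\geq\lambda\,\mathrm{Id}$) reduces the positivity to an ODE system on $(\alpha,\beta)$ whose solution with the prescribed boundary data is precisely \eqref{def-uv}. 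Part (b), with $\lambda=0$, follows the same pattern but uses instead the Li--Yau dimensional reinforcement $-L\log T_r\phi\leq n/(2r)$; integrating this correction through the monotonicity calculation produces the additive excess $\tfrac{n}{2}(\sqrt{t}-\sqrt{s})^2$.

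The main obstacle is the differentiation of $\Phi$. Because $\mathcal{Q}^{\varepsilon}_\beta$ is nonlinear in its argument, the chain rule does not merely give $LT_{\alpha}\mathcal{Q}^{\varepsilon}_\beta T_{t-r}\psi$ but produces a weighted average of $LT_{t-r}\psi$ against the probability measure with density proportional to $e^{T_{t-r}\psi/\varepsilon}$ pushed forward by $T_{\varepsilon\beta}$. Matching this term against the quadratic Bakry--\'Emery contribution in a way that forces $\Phi'(r)\geq 0$ is what dictates the specific form of \eqref{def-uv}, and is also what is responsible, in the limit $\varepsilon\to 0$, for the recovery of the classical Wasserstein contraction \eqref{eq-contr-w} via \eqref{gamma}. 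Once this algebraic balance is carried out and the resulting ODE is integrated, \eqref{eqn:comm-plan} (and its dimensional counterpart) holds, and the two stated contraction inequalities follow immediately from the dual formulation together with reversibility.
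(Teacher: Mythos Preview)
Your proposal is correct and follows essentially the same route as the paper: reduce via the dual formula \eqref{duep} and reversibility of $m$ to the pointwise commutation inequality $T_{u_t(b)}\mathcal{Q}^{\varepsilon}_b\psi\geq\mathcal{Q}^{\varepsilon}_{v_t(b)}T_t\psi$, and prove the latter by differentiating the interpolation $s\mapsto T_{\alpha(s)}\mathcal{Q}^{\varepsilon}_{\beta(s)}T_{t-s}\psi$ and choosing $(\alpha,\beta)$ to solve an ODE system (the paper's Lemma~\ref{el-comm}). One small caveat: in the monotonicity computation the gradient bound actually needed is the weighted form $|\nabla T_a g|^2/T_a g\le e^{-\lambda a}T_a(|\nabla g|^2/g)$ (applied with $g=e^{T_{t-s}\psi/\varepsilon}$) rather than the unweighted $|\nabla T_a\phi|^2\le e^{-\lambda a}T_a(|\nabla\phi|^2)$ you quote; both are equivalent to $\mathrm{Hess}(V)\ge\lambda\,\mathrm{Id}$ for diffusions, but only the former closes the algebra directly.
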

The proof of this theorem relies on the following commutation property between the Markov semigroup $T_t$ and the semigroup $\mathcal Q^{\varepsilon}_t$ defined at~\eqref{def-hje}. Let us notice that the second statement of  next lemma was proved in \cite[Section~5]{bgl2}. 

\begin{lemma}[Commutation property]\label{el-comm}
\label{lem-commutation}
Let $s, t \geq 0$, $\varepsilon > 0$ and $f: \ZZ \to \mathbb{R}$ be any bounded measurable  function.
\begin{enumerate}[(a)] 
\item If ${\rm Hess}(V)\geq \lambda\,{\rm Id}$ for some real $ \lambda,$ then
\begin{equation}\label{qu}
\mathcal{Q}^{\varepsilon}_{v_t(b)}(T_tf)\leq T_{u_t(b)}(\mathcal{Q}_b^{\varepsilon} f)
\end{equation}
where for each $t \geq 0,$ the numbers  $u_t(b)$, $v_t(b)$ and $b$ are given in~\eqref{def-uv}.\\
 Moreover, for $\varepsilon$ small enough and $t>0$ fixed, \eqref{qu} is valid for all  $b$ positive. 
\item If $V=0$ then 
$$
\mathcal{Q}^{\varepsilon}_1 (T_t f)\leq T_s(\mathcal{Q}^{\varepsilon}_1 f)+\frac{n}{2}(\sqrt t - \sqrt s)^2.
$$
\end{enumerate}
\end{lemma}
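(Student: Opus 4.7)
The starting point is the Hopf--Cole identity. Writing $h_u := T_{\varepsilon u}(e^{f/\varepsilon})$ and $\varphi_u := \mathcal{Q}^{\varepsilon}_u f = \varepsilon\log h_u$, from $\partial_u h = \varepsilon L h$ with $L = \tfrac12(\Delta - \nabla V\cdot\nabla)$ together with the elementary identity $Lh/h = L(\log h) + \Gamma(\log h)$, $\Gamma(\psi) := \tfrac12|\nabla\psi|^2$, one deduces that $\varphi_u$ solves the viscous Hamilton--Jacobi equation
$$
\partial_u \varphi_u = \varepsilon L \varphi_u + \Gamma(\varphi_u).
$$
Thus $(\mathcal{Q}^{\varepsilon}_b)_{b\geq 0}$ is itself a (nonlinear) semigroup with generator $\varepsilon L + \Gamma$, degenerating as $\varepsilon \to 0$ to the Hopf--Lax semigroup generated by $\Gamma$ alone. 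This observation underlies both statements of the lemma.

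To prove (a), fix $t \geq 0$ and $b$ in the admissible range, and consider the interpolation
$$
\Lambda(s) := T_{\alpha(s)}\bigl(\mathcal{Q}^{\varepsilon}_{\beta(s)}\bigl(T_{\gamma(s)} f\bigr)\bigr),\qquad s \in [0,t],
$$
with boundary data $(\alpha,\beta,\gamma)(0) = (0, v_t(b), t)$ and $(\alpha,\beta,\gamma)(t) = (u_t(b), b, 0)$, so that $\Lambda(0)$ is the left-hand side and $\Lambda(t)$ the right-hand side of~\eqref{qu}. It then suffices to show $\Lambda'(s) \geq 0$. The derivative is computed from (i) $\partial_\tau T_\tau = L T_\tau$, (ii) the HJ equation $\partial_b \mathcal{Q}^{\varepsilon}_b = \varepsilon L + \Gamma$, and (iii) the Fréchet derivative of $\mathcal{Q}^{\varepsilon}_b$ at $g$, which is the tilted average $\phi \mapsto T_{\varepsilon b}(e^{g/\varepsilon}\phi)/T_{\varepsilon b}(e^{g/\varepsilon})$. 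Setting $\psi = \mathcal{Q}^{\varepsilon}_{\beta(s)}T_{\gamma(s)}f$ and $g = T_{\gamma(s)}f$, one expresses $\Lambda'(s)$ as a combination of $L\psi$, $\Gamma(\psi)$, and a tilted mean of $Lg$. The curvature hypothesis $\mathrm{Hess}(V) \geq \lambda\,\mathrm{Id}$ is equivalent in our setting to the Bakry--Émery inequality $\Gamma_2(\psi) \geq \lambda\,\Gamma(\psi)$, whence the gradient contraction $\Gamma(T_\tau g) \leq e^{-2\lambda\tau} T_\tau\Gamma(g)$. Plugging this contraction into the expression for $\Lambda'(s)$, one obtains that $\Lambda'(s) \geq 0$ provided $(\alpha,\beta,\gamma)$ satisfies a coupled ODE of the form
$$
\gamma'(s) = -1,\qquad \alpha'(s) + \varepsilon \beta'(s) = 1,\qquad \beta'(s)\, \text{proportional to}\, e^{\lambda(t-s)},
$$
whose unique solution with the prescribed boundary data integrates explicitly to the formulas~\eqref{def-uv}. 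The admissibility constraint on $b$ when $\lambda > 0$ is exactly the maximal interval on which $v_t(b)$ stays positive, that is, the maximal interval of existence of $\beta$.

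For (b) the same interpolation is used with $V = 0$, but the driving inequality is now the dimensional Bakry--Émery bound $\Gamma_2(\psi) \geq (L\psi)^2/n$. The resulting ODE system absorbs the dimensional defect into the correction $\tfrac{n}{2}(\sqrt{t}-\sqrt{s})^2$. This is the computation carried out in~\cite[Section~5]{bgl2} and yields the claimed inequality directly.

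The main obstacle lies in item (iii) above: the Fréchet derivative of $\mathcal{Q}^{\varepsilon}_b$ is a nonlinear tilted mean, not a first-order differential operator, so controlling its action on $Lg$ in terms of quantities compatible with the $\Gamma_2$-calculus requires a Jensen-type step combined with the gradient contraction coming from the curvature (resp.\ curvature--dimension) condition. It is precisely the way this bound is matched with the evolution of $\Gamma(\psi)$ along the flow that pins down the time change~\eqref{def-uv} and, in the curved case $\lambda > 0$, forces the restriction on $b$.
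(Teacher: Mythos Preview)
Your overall strategy matches the paper's exactly: interpolate via $\Lambda(s)=T_{\alpha(s)}\,\mathcal{Q}^{\varepsilon}_{\beta(s)}\,T_{t-s}f$, differentiate in $s$, and use a curvature-based gradient inequality to force $\Lambda'(s)\ge 0$ along a suitably chosen time change $(\alpha,\beta)$. However, two concrete details in your sketch are not right, and they are precisely the ones that pin down~\eqref{def-uv}.

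First, the gradient inequality you invoke, $\Gamma(T_\tau g)\le e^{-2\lambda\tau}T_\tau\Gamma(g)$, is not the one that does the job here. The paper sets $g=\exp(T_{t-s}f/\varepsilon)$ (which, incidentally, makes your Fr\'echet-derivative / tilted-mean bookkeeping unnecessary: the chain rule for $L$ on $\log T_{\varepsilon\beta}g$ gives everything directly). What then appears in $\Lambda'(s)$ is a comparison between $|\nabla T_{\varepsilon\beta}g|^2/T_{\varepsilon\beta}g$ and $T_{\varepsilon\beta}\bigl(|\nabla g|^2/g\bigr)$, and the relevant bound under $\mathrm{Hess}(V)\ge\lambda\,\mathrm{Id}$ (with the factor $\tfrac12$ in $L$) is the \emph{strong} form
\[
\frac{|\nabla T_\tau g|^2}{T_\tau g}\ \le\ e^{-\lambda\tau}\,T_\tau\!\left(\frac{|\nabla g|^2}{g}\right),
\]
applied to the \emph{middle} semigroup $T_{\varepsilon\beta}$, not to the inner $T_{t-s}$.

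Second, because the exponential rate enters through $\tau=\varepsilon\beta(s)$ and not through $t-s$, the ODE for $\beta$ is not ``$\beta'$ proportional to $e^{\lambda(t-s)}$''. One obtains the autonomous system
\[
\alpha'+\varepsilon\beta'=1,\qquad \alpha'=e^{\lambda\varepsilon\beta},
\]
whose integration with $\alpha(0)=0$, $\beta(t)=b$ gives exactly the formulas~\eqref{def-uv} and, when $\lambda>0$, the admissibility restriction on $b$. Your proposed ODE would only reproduce the $\varepsilon\to 0$ limit $v_t(b)\to b\,e^{\lambda t}$, not the full expressions. So the architecture of the argument is correct, but the identification of where and in which form the gradient bound is used---and hence the resulting ODE---needs to be fixed before it lands on~\eqref{def-uv}.
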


\begin{proof}
We only have to  prove the first statement  (a).
Let us define for each $s\leq t$ the function
$$
\Lambda (s) = T_\alpha \mathcal Q^{\varepsilon}_\beta (T_{t-s}f)
$$
with $\alpha: [0,t] \to [0, \infty)$ an increasing function such that $\alpha(0)=0$, and $\beta: [0,t] \to [0, \infty)$   and we call $\beta(t)=b$. Setting $g = \exp(T_{t-s}f/\varepsilon)$, using  the chain rule for the diffusion operator $L$ we obtain
\begin{equation}
\begin{split}
\Lambda'(s)& = \varepsilon T_\alpha \left[\alpha' L \log T_{\varepsilon \beta}g + \frac{1}{T_{\varepsilon \beta}g}T_{\varepsilon \beta}\left(\varepsilon \beta' Lg -gL\log g\right)\right]\\
& =  \varepsilon T_\alpha \left[\alpha' \left(\frac{LT_{\varepsilon \beta}g}{T_{\varepsilon \beta}g} - \frac{|\nabla T_{\varepsilon \beta}g|^2}{2(T_{\varepsilon \beta}g)^2}\right)  + \frac{1}{T_{\varepsilon \beta}g}T_{\varepsilon \beta}\left(\varepsilon \beta' Lg -Lg+ \frac{|\nabla g|^2}{2g}\right)\right]\\
& = \varepsilon T_\alpha \left[\frac{1}{T_{\varepsilon \beta}g}\left(LT_{\varepsilon \beta}g(\alpha'+\varepsilon \beta'-1) +T_{\varepsilon \beta}\left(\frac{|\nabla g|^2}{2g} \right)-\alpha' \frac{|\nabla T_{\varepsilon \beta}g|^2}{2T_{\varepsilon \beta}g}\right)\right]\\
& \geq \varepsilon T_\alpha \left[\frac{1}{T_{\varepsilon \beta}g}\left(LT_{\varepsilon \beta}g(\alpha'+\varepsilon \beta'-1) +\frac{1}{2}T_{\varepsilon \beta}\left(\frac{|\nabla g|^2}{g} \right)(1-e^{-\lambda \varepsilon \beta}\alpha')\right)\right]\\
\end{split}
\end{equation}
where the last inequality is given by the commutation,
$$
\frac{|\nabla T_t g|^2}{T_t g} \leq e^{-\lambda t}T_t\left(\frac{|\nabla g|^2}{g}\right)
$$
which is implies  by the condition ${\rm Hess}(V)\geq \lambda{\rm Id}$ (see for instance~\cite[Section~3.2]{bgl}). If the following conditions on $ \alpha$ and $ \beta$  hold
 \begin{equation}\label{syst}
 \left\{
 \begin{array}{l}
 \alpha'+\varepsilon \beta'-1=0\\
 1-e^{-\lambda \varepsilon \beta}\alpha'=0,
 \end{array}\right.
 \end{equation}
we have $\Lambda'(s)\geq 0$ for each $0\leq s \leq t$. In particular $\Lambda(0)\leq \Lambda(t)$ for each $t\geq 0$, that is
$$
\mathcal{Q}^{\varepsilon}_{v_t(b)}(T_tf)\leq T_{u_t(b)}(\mathcal{Q}^{\varepsilon}_b f)
$$
where $v_t(b)=\beta(0)$ and $u_t(b)=\alpha(t)$. Finally solving system (\ref{syst}) together with the conditions $\alpha(0)=0$, $\beta(t)=b$, we can compute the explicit formulas for $v$ and $u$ as in  statement (a). In particular, substituting $\alpha'$ in the second equation of the system and integrating from 0 to $t$ we obtain the following relation
\begin{equation}\label{star}
e^{-\varepsilon \lambda \beta(0)}= 1+ e^{\lambda t}(e^{-\varepsilon \lambda b}-1).
\end{equation}
If we assume for a while that the term on the right hand side is positive, we obtain 
$$
\beta(0)= -\frac{1}{\lambda \varepsilon} \log (1+ e^{\lambda t}(e^{-\varepsilon \lambda b} - 1))
$$
and 
$$
\alpha(t)= t+\frac{1}{\lambda} \log \left(\frac{e^{-\varepsilon\lambda b}}{ 1+ e^{\lambda t}(e^{-\varepsilon \lambda b} - 1)}\right).
$$
Let us study now the sign of the right hand side in~\eqref{star}. 
\begin{itemize}
\item If $\lambda \leq 0$, it is positive for each $b \in \mathbb{R}$;
\item If $\lambda > 0$ in order to be positive, we need the condition for $b$,
\begin{equation*}
b<-\frac{1}{\varepsilon \lambda}\log(1-e^{-\lambda t}) := b_0.
\end{equation*}
Finally let us consider the case when $\varepsilon>0$ is small. From~\eqref{star} we obtain the relation, 
\begin{equation*}
\beta (0)= b e^{\lambda t} + o(\varepsilon)
\end{equation*}
for each $\lambda \in \mathbb{R}$ and $b$ positive.
\end{itemize} 
This completes the proof of the lemma.
\end{proof}

\begin{proof}[Proof of Theorem~\ref{thm-contr}]
The proof is based on the dual formulation stated in Theorem~\ref{res-02}. Let $\psi\in C_{b}(\ZZ)$, by Lemma~\ref{lem-commutation} under the condition ${\rm Hess}(V)\geq \lambda{\rm Id}$ and by time reversibility, 
\begin{displaymath}
\begin{split}
\IZ \psi\, T_tg\, dm - \IZ \mathcal Q^{\varepsilon}_b \psi\, T_{u_t(b)} f \, dm & = \IZ T_t \psi\, g\, dm - \IZ T_{u_t(b)}\mathcal{Q}^{\varepsilon}_b \psi\, f \, dm \\
& \leq \IZ T_t \psi\, g\, dm - \IZ \mathcal{Q}^{\varepsilon}_{v_t(b)} T_t \psi\, f \, dm\\
& \leq \mathcal{A}^{\varepsilon}_{v_t(b)}(fm,gm) - \varepsilon H(fm|m).
\end{split}
\end{displaymath}
Finally taking the supremum over $\psi\in C_{b}(\ZZ)$ we obtain the desired inequality in $(i)$. 
The same argument can be used to prove the contraction property in $(ii)$, applying  the second commutation inequality in Lemma~\ref{lem-commutation}.
\end{proof}

\begin{remark}
Let observe that if $\lambda < 0$, the function $\beta(s)$, for $s \in [0,t]$, is decreasing, while for $\lambda > 0$ it is increasing and if $\lambda=0$ it is the constant function $\beta(t)=b$. In particular by choosing $b=1$~\eqref{qu} writes as follows
 \begin{equation*}
\mathcal Q^{\varepsilon}_1(T_t f) \leq T_t (\mathcal Q^{\varepsilon}_1f).
\end{equation*}
\end{remark}

\begin{remark}
Lemma~\ref{lem-commutation} can be proved in the general context of a Markov diffusion operator under the Bakry-\'Emery curvature-dimension condition. Its application to more general problems is actually a working paper of the third author. 
\end{remark}

\begin{remarks}
Let us point out two converse assertions.
\begin{enumerate}[(1)]
\item The contraction in entropic cost in Theorem~\ref{thm-contr} implies back the contraction in Wasserstein cost. Indeed, under the assumptions of Section~\ref{sec-schrodinger}, it can be easily checked that when $\varepsilon \to 0$, we have $u(t) \to t$ and $v(t) \to be^{\lambda t}$. Therefore,  with   (\ref{gamma})  and (\ref{contr-entr}), one  recovers (\ref{eq-contr-w}). Analogous arguments can be applied to recover the contraction of the Wasserstein cost~(\ref{dim-contr}) when $V=0$. 
\item The commutation property in Lemma~\ref{lem-commutation} implies back the convexity of the potential $V$. This can be seen by  differentiating~\eqref{qu} with respect to  $b$  around 0. We believe also that for $\varepsilon>0$ fixed, inequality~\eqref{contr-entr} implies back the convexity of the potential. 
\end{enumerate}
\end{remarks}

\section{Examples}
\label{sec-examples}
In this section we will compute explicitly the results discussed in the previous sections, between two given measures. We first compute the Wasserstein cost, its dual and Benamou-Brenier formulations and the displacement interpolation, as exposed in the introduction. Then, we'll do the same for the entropic cost, taking in consideration two different reference path measures $R$. In particular, we'll compute~(\ref{duep}), for $u=1$ and $\varepsilon >0$ and look at the behavior in the limit $\varepsilon \to 0$ to recover the classical results of the optimal transport. 
For abuse of notation we will denote with $\mu_t$ both the interpolation and its density with respect to the Lebesgue measure $dx$.
We introduce for Gaussian measures the following notation:  for any  $m \in \ZZ$ and $v \in \R$, the density with respect to the Lebesgue measure of $\N(m,v^2)$ is given by 
$$
(2\pi v^2)^{-n/2}\exp\Big(-\frac{|x-m|^2}{2v^2}\Big),
$$
As marginal measures we consider for $x_0, x_1 \in \mathbb{R}^n$
\begin{equation}\label{eq:cond}
\begin{array}{ll}
\mu_{0}(x) := \N(x_0,1), &  \mu_{1}(x) := \N(x_1,1).
\end{array}
\end{equation}

Note that the entropic interpolation between two Dirac measures $\delta_x$ and $\delta_y$ should be the Bridge $R^{xy}$ between $x$ and $y$ with respect to the reference measure $R$. But unfortunately $H(\delta_x|m),H(\delta_y|m)=\infty$, hence we consider only marginal measures with a density with respect to $m$. 

\subsection{Wasserstein cost}
\label{subsec-was}
The Wasserstein cost between $\mu_0, \mu_1$ as in (\ref{eq:cond}), is
\begin{equation*}
W_2^2(\mu_0, \mu_1)=d(x_0, x_1)^2.
\end{equation*}
In its dual formulation, the supremum is reached by the function 
$$
\psi(x)= (x_1-x_0)x
$$
and in the Benamou-Brenier formulation the minimizer vector field is 
$$
v^{MC}= x_1-x_0
$$
The displacement or McCann interpolation is given by 
\begin{equation}\label{eq:mccan}
\mu_{t}^{MC}(x) = \N(x_t,1)
\end{equation}
where $x_t=(1-t)x_0+tx_1.$
In other words using the push-forward notation~\eqref{mccan},
$$
\mu_{t}^{MC}(x) = (\hat x_t^{MC})_\#\mu_0
$$
with $\hat x_t^{MC}(x):= (1-t)x +t(x+ x_1- x_0)$ a trajectory whose associated velocity field is $v^{MC}=x_1-x_0.$

\subsection{Schrödinger cost}
\subsubsection*{Heat semigroup}

As a first example we consider on the state space $\mathbb{R}^{n}$  the Heat (or Brownian) semigroup, that corresponds to the case $V=0$ in our main example in Section~\ref{sec-kolmo}, whose infinitesimal generator is the laplacian $L=\Delta/2$ and the invariant reference measure is the Lebesgue measure $dx$.  Since we're interested in the $\varepsilon-$entropic interpolation, with $\varepsilon>0$, we take in consideration the Heat semigroup with a dilatation in time, whose density kernel is given by
$$
p_{t}^\varepsilon (x,y) =(2\pi \varepsilon t)^{-n/2} \exp \left(-\frac{|x-y|^2}{2\varepsilon t}\right) 
$$ 
i.e. $p_{t}^\varepsilon (x,y)=\N(y,\varepsilon t)$ for $t>0, (x,y) \in \mathbb{R}^{n}\times \mathbb{R}^{n}.$

\begin{itemize}
\item The entropic interpolation~\eqref{entropic} is
\begin{equation}\label{eq:heat}
\mu_{t}^\varepsilon(x) = \N(x_t,D_t^\varepsilon)
\end{equation}
where $x_t$ is like in (\ref{eq:mccan}) and $D_{t}^\varepsilon: [0,1] \to \mathbb{R}^+$ is a polynomial function given by
$$
D_{t}^\varepsilon = \alpha^{\varepsilon}t(1-t)+1
$$ 
with $\alpha^{\varepsilon}= \delta^2/(1+\delta)$ where $\delta=(\varepsilon-2+\sqrt{4+\varepsilon^2})/2$. We observe that $D_t^\varepsilon$ is such that $D_0=D_1=1$ with a maximum in $t=1/2$ for each $\varepsilon >0$, (see Figure~\ref{fig:variance}). 

 It's worth to point out how we managed to derive an explicit formula for the entropic interpolation. The key point is the resolution of the Schr\"odinger system~\eqref{eq-10} that in our example writes as 
$$
\left\{\begin{array}{ll}
(2\pi)^{-n/2} \exp \left(-\frac{|x-x_0|^2}{2}\right) &= f(x)\int g(y)(2\pi \varepsilon t)^{-n/2} \exp \left(-\frac{|x-y|^2}{2\varepsilon t}\right)\, dy\\
(2\pi)^{-n/2} \exp \left(-\frac{|x-x_1|^2}{2}\right) &=g(x)\int f(y)(2\pi \varepsilon t)^{-n/2} \exp \left(-\frac{|x-y|^2}{2\varepsilon t}\right) \,dy.
\end{array}\right.
$$
By taking $f$ and $g$ exponential functions of the type 
$$
e^{a_2x^2+a_1x+a_0} \quad \textrm{with} \quad a_0,a_1,a_2 \in \R
$$
we can solve the system explicitely by determining the coefficients of $f$ and $g$.

One can also express the entropic interpolation through the {\it push-forward} notation as introduced at~\eqref{eq-20}, $\mu_{t}^\varepsilon = (\hat x_{t}^\varepsilon)_\#\mu_{0}$ where 
$$
\hat x_{t}^\varepsilon(x)= \sqrt{D_{t}^\varepsilon}(x-x_0)+x_t.
$$
Furthermore $\hat x_{t}^\varepsilon$ satisfies the differential equation 
\begin{equation}\label{eq:edo}
\dot x_t^\varepsilon = v^{cu, \varepsilon}(x_t^\varepsilon)
\end{equation}
where $v^{cu, \varepsilon}$ is the current velocity, and is given by 

\begin{displaymath}
v^{cu, \varepsilon} = \frac{\dot D_{t}^\varepsilon}{2 D_t^\varepsilon}(x-x_t) + x_{1} - x_{0}.
\end{displaymath} 
It can be finally verified that the entropic interpolation (\ref{eq:heat}) satisfies the {\it PDE} 
\begin{equation}\label{eq:pde}
\dot \mu_t^\varepsilon + \nabla \cdot (\mu_t^\varepsilon v^{cu, \varepsilon}) = 0. 
\end{equation}
 
\begin{remark}\label{remm}
Let observe that if $x_0=x_1$, $\mu_t^\varepsilon$ is not constant in time, unlike the McCann interpolation.\
\end{remark}

\item Denoting $P\in\PO$ the path measure whose flow is given by (\ref{eq:heat}) and that minimizes $H(\cdot|R)$, the entropic cost between $\mu_0, \mu_1$ as in (\ref{eq:mccan}) is
\begin{equation*}
\mathcal A^{\varepsilon}_{u}(\mu_0, \mu_1)= H( P|R).
\end{equation*}
The easiest way to compute this quantity is to use the Benamou-Brenier formulation in Section~\ref{sec-sbb}. The resulting formula has not a nice and interesting form, therefore we don't report it explicitly.
\item In the dual formulation proved in Section~\ref{sec-schrodinger}, the supremum is reached by the function $\psi\in C_{b}(\ZZ)$ given, up to a constant term, by 
\begin{equation}\label{eq-psi}
\psi_t(x)= -\frac{1}{2} \frac{\delta}{1+\delta(1-t)}x^2-\frac{1}{2}\frac{\gamma}{1+\delta(1-t)}x
\end{equation}
where $\delta$ as in \eqref{eq:heat} and $\gamma=2[x_0(1+\delta)-x_1]$. 
\item In the Benamou-Brenier formulation in Theorem~\ref{teo:teo} the minimizer vector field is 
$$
v^{H}= \nabla \psi_t.
$$
where $\psi_t$ is given by (\ref{eq-psi}) and $\nabla \psi_t$ represents the forward velocity. It can be easily verified that the equation
\begin{equation}
\partial_t\mu_t+\nabla \cdot \left(\mu_t\left[ \nabla\psi_t-\frac{\nabla \mu_t}{2 \mu_t} \right] \right)=0
\end{equation}
is satisfied.

\end{itemize}

\subsubsection*{Ornstein-Uhlenbeck semigroup}

As a second example, we consider on the state space $\mathbb{R}^n$ the Ornstein-Uhlenbeck semigroup, that corresponds to the case $V=|x|^2/2$ for the Kolmogorov semigroup in Section~\ref{sec-kolmo}, whose infinitesimal generator is given by $L=(\Delta - x \cdot \nabla)/2$ and the invariant measure is the standard Gaussian in $\mathbb{R}^n$ . Here again we consider the kernel representation with a dilatation in time; in other words, for $\varepsilon >0$, the kernel with respect to the Lebesgue measure is given by

$$
p_{t}(x,y) = (2\pi (1-e^{-\varepsilon t}))^{-n/2} \exp \left(-\frac{|y-xe^{-\varepsilon t/2}|^2}{2(1-e^{-\varepsilon t})}\right) 
$$
i.e. $p_{t}(x,y)=\N(xe^{-\varepsilon t/2}, 1-e^{-\varepsilon t}).$

\begin{itemize}
\item The entropic interpolation~\eqref{entropic} is given by
\begin{equation}\label{eq:entrou}
\mu_{t}^{\varepsilon}(x) = \N(m_t, D_t^\varepsilon)
\end{equation}
where $m_t = a_t [(e^{-\varepsilon t/2}-e^{-\varepsilon(1-t/2)})x_0+ (e^{-\varepsilon(1-t)/2}- e^{-\varepsilon(1+t)/2})x_1]$ with 
$$
a_t := \frac{1+\delta-\delta e^{-\varepsilon}}{(1-e^{-\varepsilon})[\delta(1+\delta)(e^{-\varepsilon t}+e^{-\varepsilon(1-t)})-2\delta^{2}e^{-\varepsilon}]}
$$
with $\delta$ as in (\ref{eq-psi1}), and $D_t^{\varepsilon}:[0,1] \to \mathbb{R}^+$ defined as
$$
D_{t}^{\varepsilon} :=-1+2(1-e^{-\varepsilon})a_t
$$
satisfying, as in the case of the Heat semigroup, $D_{0}^{\varepsilon} =D_{1}^{\varepsilon}=1$. 

Furthermore, we have $\mu_t^{\varepsilon}  = (\hat x_t^{\varepsilon} )_\#\mu_0$
where
$$
\hat x^{\varepsilon} _t := \sqrt{D_t^{\varepsilon}}(x-x_0) + m_t.
$$

It can be verified that equations (\ref{eq:pde}) and (\ref{eq:edo}) hold true also in the Ornstein Uhlenbeck case, with the current velocity given by
\begin{equation}\label{eq:curr}
v^{\varepsilon}_{cu}  = \frac{\dot D_t^{\varepsilon}}{2D_t}(x-x_0)+\dot m_t
\end{equation}
\item The entropic cost between $\mu_0, \mu_1$ can be computed as in the Heat semigroup case by 
\begin{equation*}
\mathcal A^{\varepsilon}_{u}(\mu_0, \mu_1)= H( P|R)
\end{equation*}
where $ P$ is the path measure associated to the flow (\ref{eq:entrou}) which minimizes $H(\cdot|R)$.
\item In the dual formulation at Section~\ref{sec-schrodinger}, the supremum is reached, up to a constant term, by the function 
\begin{equation}\label{eq-psi1}
\psi_t(x)= -\frac{1}{2}\frac{\varepsilon\delta e^{-\varepsilon(1-t)}}{1+\delta(1-e^{-\varepsilon(1-t)})}x^2+\frac{\varepsilon\gamma e^{-\varepsilon(1-t)/2}}{1+\delta(1-e^{-\varepsilon(1-t)})}x
\end{equation}
where  $\delta=(e^{-\varepsilon}-\sqrt{e^{-2\varepsilon}-e^{-\varepsilon}+1})/(e^{-\varepsilon}-1)$ and $\gamma=(x_0 e^{-\varepsilon/2}-x_1(1+\delta-\delta e^{-\varepsilon}))/(1-e^{-\varepsilon})$. 
\item In the Benamou-Brenier formulation (Theorem~\ref{teo:teo}) the minimizer vector field is 
$$
v^{OU}= \nabla \psi_t.
$$

\end{itemize}

\begin{remark}
Let observe that both in the Heat and Ornstein-Uhlenbeck cases, if we take the limit $\varepsilon \to 0$ of the entropic interpolation, of the velocities $v^{H}, v^{OU}$ and of the function $\psi_t$, we recover the respective results for the Wasserstein cost stated in Subsection~\ref{subsec-was}.
\end{remark}

In the following figures we refer to the McCann interpolation with a dotted line, the Heat semigroup with a dashed line and the Ornstein Uhlenbeck semigroup with a continuous line. We fix $\varepsilon=1$ and consider marginal measures in one dimension. Figure~\ref{var} represents the variance of the three interpolations, independent from the initial and final means $x_0, x_1$. Figures~\ref{fig-1} and~\ref{fig-2} correspond to the mean in the three cases respectively with the initial and final means symmetric w.r.t the origin, and for any means. It's worth to remark from these images that the McCann interpolation and the entropic interpolation in the case of the heat semigroup, have the same mean. Finally figures~\ref{fig-3} and~\ref{fig-4} represent the three interpolations at time $t=0, 1/2, 1$ respectively with different marginal data, as before.

\begin{figure}[htbp]
\centering
\includegraphics[scale=0.4]{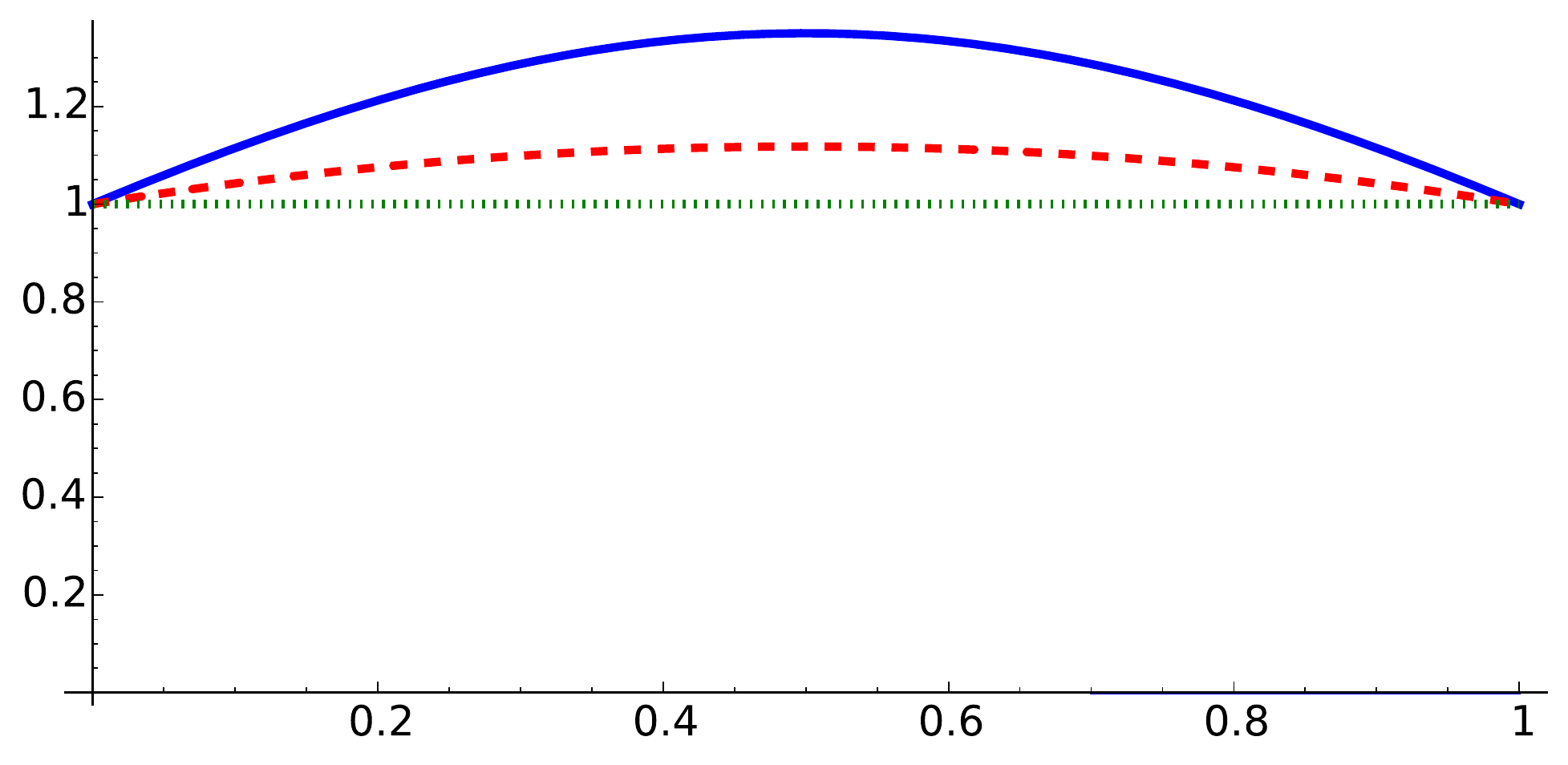}
\caption{Variance, $\varepsilon = 1$}\label{fig:variance}
\label{var}
\end{figure}

\begin{figure}[!h]
\begin{center}
\includegraphics[width=9cm]{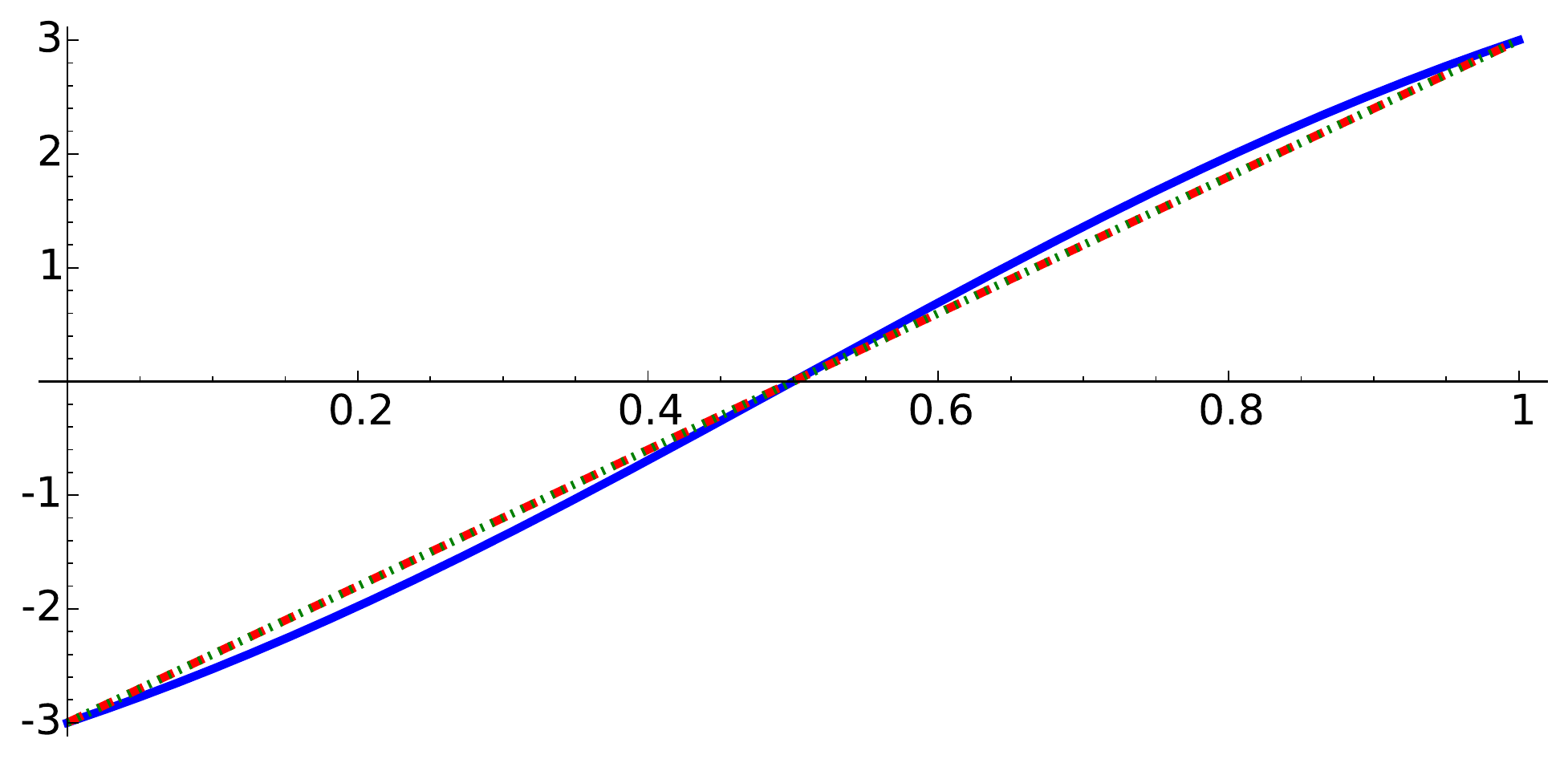}
\caption{Mean with $x_0=-3, x_1=3, \varepsilon = 1$}
\label{fig-1}
\end{center}
\end{figure}

\begin{figure}[!h]
\begin{center}
\includegraphics[width=9cm]{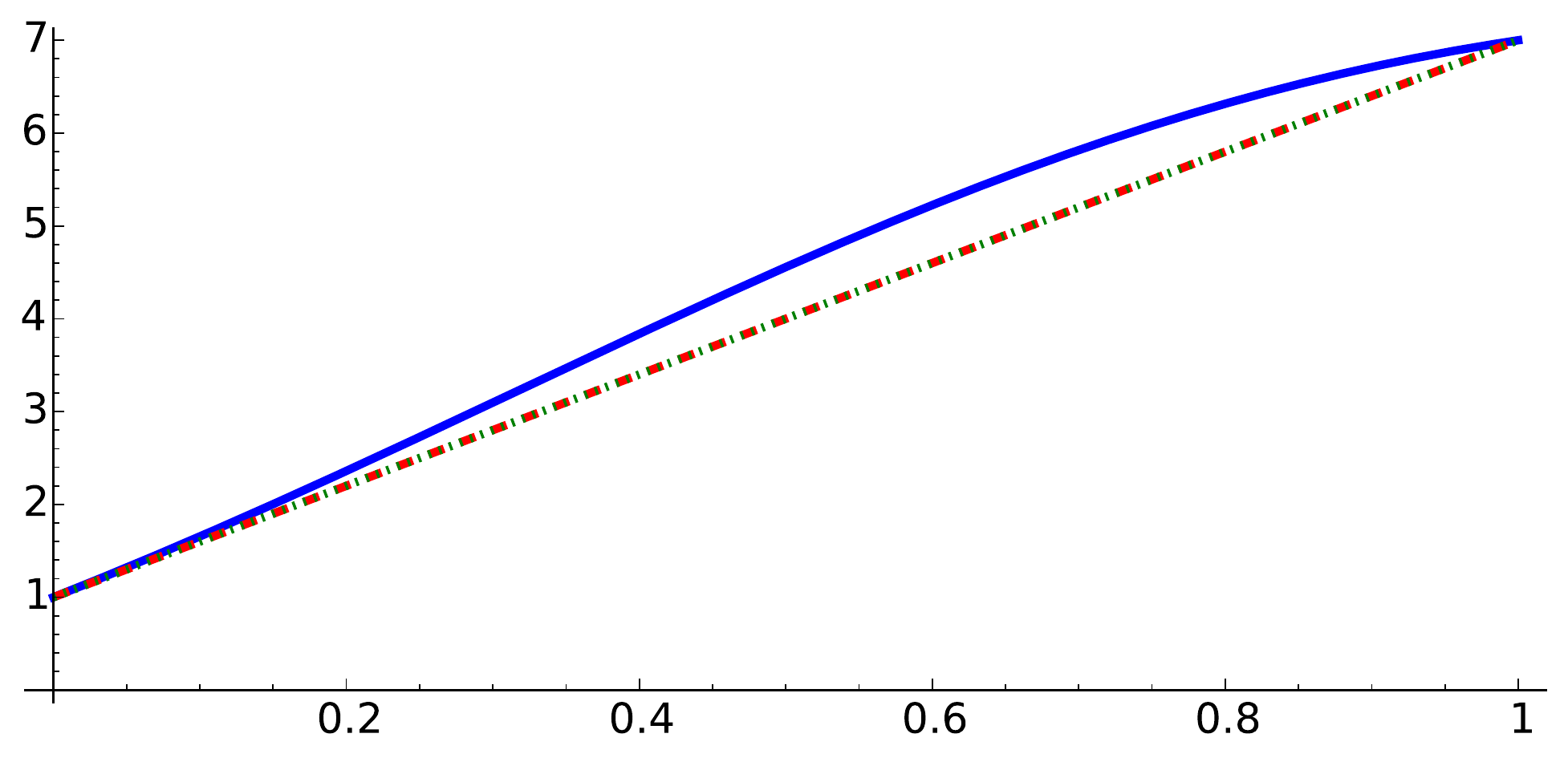}
\caption{Mean $x_0=1, x_1=7, \varepsilon = 1$}
\label{fig-2}
\end{center}
\end{figure}

\begin{figure}[!h]
\begin{center}
\includegraphics[width=9cm]{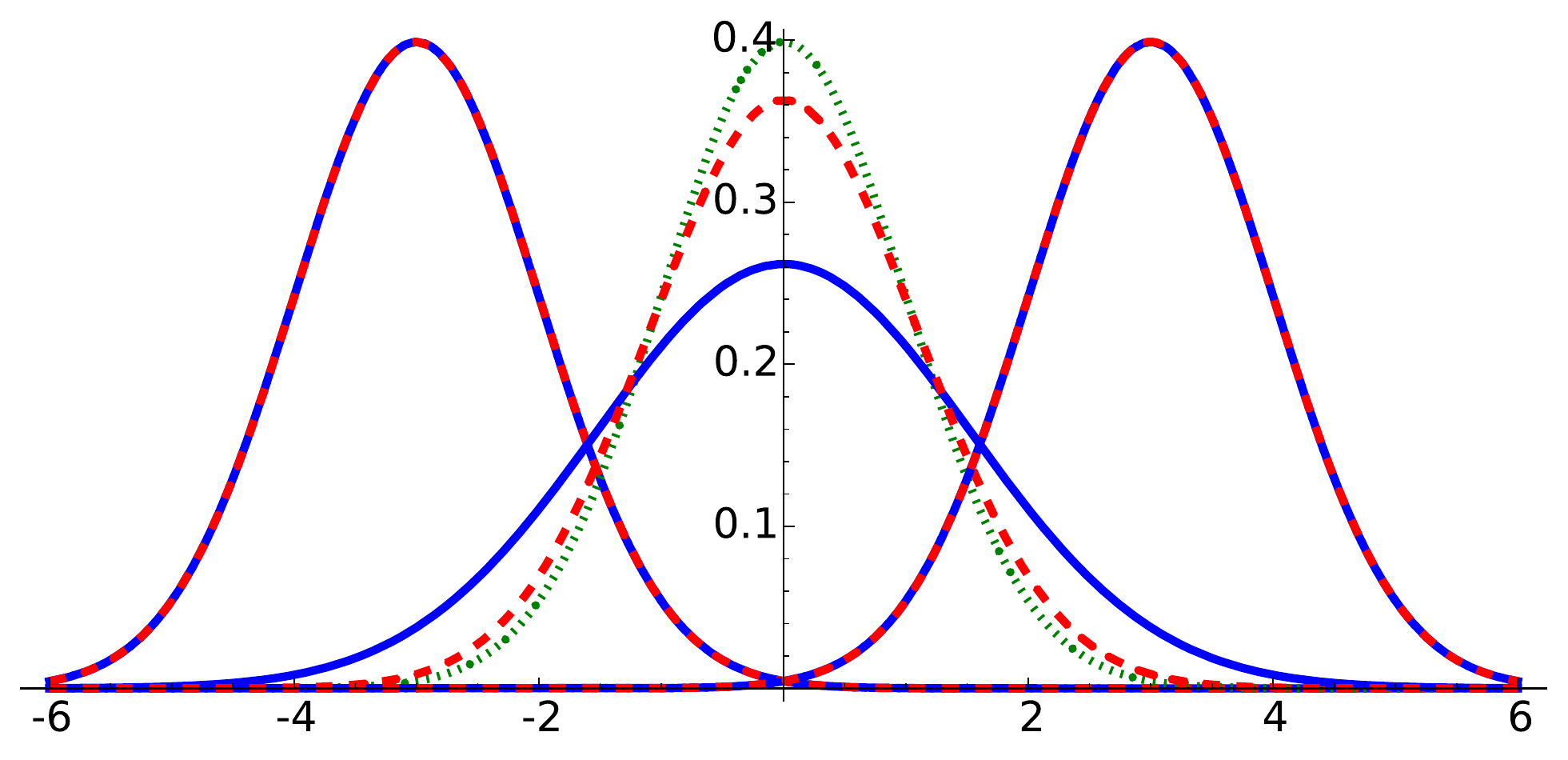}
\caption{Interpolations at time $t=0,1/2,1$, $x_0=-3, x_1=3, \varepsilon = 1$}
\label{fig-3}
\end{center}
\end{figure}

\begin{figure}[!h]
\begin{center}
\includegraphics[width=9cm]{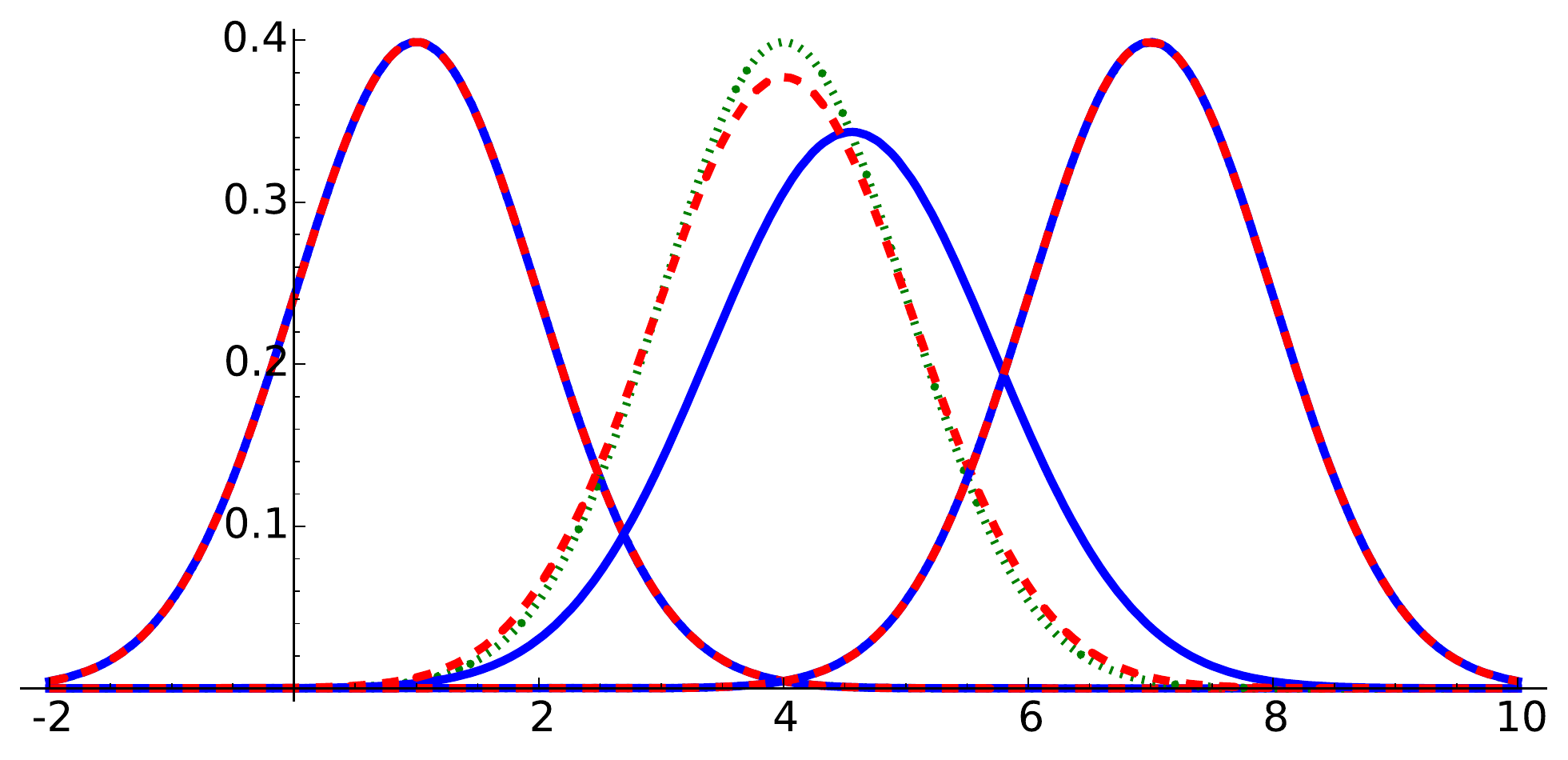}
\caption{Interpolations at time $t=0,1/2,1$, $x_0=1, x_1=7, \varepsilon = 1$}
\label{fig-4}
\end{center}
\end{figure}


\end{document}